\title[]{Some stability properties of Hamiltonian Poisson integrators}
\author{Oscar Cosserat}
\address{Göttingen Mathematisches Institut,
Georg-August-Universität Göttingen,
Office 021, Hauptgebaüde,
Bunsenstrasse 3-5,
37073 Göttingen - Germany.}
\email{oscar.cosserat@mathematik.uni-geottingen.de}
\date{}
\begin{document}

\maketitle

\begin{abstract}
Hamiltonian Poisson integrators are Poisson integrators that admit a modified Hamiltonian. In this article, we illustrate the importance of the existence of a modified Hamiltonian for Poisson integrators in the context of integrable and non-integrable systems. Examples of Hamiltonian systems are provided by Lotka-Volterra dynamics; in order to investigate stability properties of Hamiltonian Poisson integrators on non-integrable systems, we exhibit a non-integrable $5$-dimensional Lotka-Volterra system and pursue numerical investigations of it. 
\end{abstract}

\tableofcontents

\textbf{Acknowledgments.} We are thankful to Pol Vanhaecke, Sigrid Leyendecker and Rui Loja Fernandes for fruitful discussions. We are also thankful to Antoine Falaize for his help on the implementation part of this work. The author acknowledges the RTG 2491 from the Deutsche Forschungsgemeinschaft for partial funding.

\section{Introduction}

About two centuries ago, Hamilton introduced what is nowadays called Hamiltonian mechanics. There, he wrote: "The development of this view [...] appears to me to open in mechanics and astronomy an entirely new field of research" \cite[p. 33]{Hamilton1833}. A fortunate match happened in the late fifties, when engineers observed how efficient it was in their numerical computations of Hamiltonian dynamics to preserve this Hamiltonian structure \cite{deVogelaere1956, Newmark1959}. Structure preserving numerical methods spread thereafter in various fields such as molecular dynamics \cite{Verlet1967}, solid mechanics \cite{Simo1995}, celestial mechanics \cite{laskar2004}, cosmology \cite{Croton2006}, plasma physics \cite{Kraus2017} and two-dimensional turbulence \cite{Cifani2022}. Applied to such dynamical systems, those methods have been proven to behave remarkably well with respect to the symmetries and conservation laws of those Hamiltonian systems (conservation of the energy, angular momentum, incompressibility...) and to possess surprising stability properties.

Somehow, at the time where those successful simulations started to be ruled out, no mathematical reason was known for those stability properties. A second fortunate match occured there. Indeed, a link has been made in the nineties between the perturbation theory of Hamiltonian systems and the stability of numerical methods preserving the Hamiltonian structure \cite{sanzserna1994, Benettin1994, Lubich1997, Reich1999}. As a consequence, KAM theory \cite{Nekhoroshev1977} has been used to deliver an explanation of the long run stability of symplectic integrators when they are applied to completely integrable Hamiltonian systems \cite{Shang1999}. The main tool for this task is called Backward Error Analysis. It uses the fact that the discrete trajectories of the numerical scheme follows the Hamiltonian flow of a modified Hamiltonian in order to study the iterates of the numerical method. An account of the theory of backward error analysis has been written in \cite[Chap. IX]{Hairer2006} and globalized to manifolds in \cite{Hansen2011}.

From now, let us mention two goals of the present article. 

The first goal is the generalisation of the aforementioned KAM estimates to the context of completely integrable systems in Poisson geometry. Indeed, there exists a notion of completely integrable system in Poisson geometry that generalizes the one of symplectic geometry. It is therefore natural to ask if a good notion of Poisson integrator also admits long run estimates when applied to such a system. The answer is yes: this notion of Poisson integrator has been called Hamiltonian Poisson integrator in \cite{Oscar2022}. Hamiltonian Poisson integrators are precisely Poisson integrators that admit a modified Hamiltonian. 

The second goal is to study the stability of Hamiltonian Poisson integrators when they are applied to approximate periodic orbits around elliptic singularities. In the literature about the backward error analysis of symplectic integrators, long run estimates are derived by assuming the existence of a compact subset on which long run iterations remain (see \cite[Cor. 6]{Lubich1997}, \cite[Sec. 5]{Reich1999}, \cite[Thm 3.3, eq. (3.8)]{Hansen2011}). This assumption -- the existence of a compact set that is preserved by long run iterations -- is true and proven in the case of symplectic integrators applied to integrable systems using KAM theory. In this context, this delivers a mathematical proof of the following phenomenon: the discrete trajectories of a symplectic integrator oscillate with exponentially small oscillations around quasi-periodic trajectories. Numerous numerical evidences also suggest that this assumption is true in a more general context than integrable systems, but no mathematical proof is known to us. Therefore, the second goal of this article is to investigate the existence of quasi-periodic oscillations around periodic orbits using -- instead of this assumption -- the existence of a modified Hamiltonian and invariant theory of symplectic and Poisson geometry. These stability properties of Hamiltonian Poisson integrators are observed to hold even with a large time-step.

\emph{Outline of the article}: We start in Section \ref{sec:integrable} by recalling the concept of integrable systems in Poisson geometry. This section also contains the first result of this article: Theorem \ref{thm:estimates} is the generalisation from symplectic integrators to Hamiltonian Poisson integrators of long run estimates on integrable systems using KAM theory. The section \ref{sec:model} introduces Lotka-Volterra dynamical systems. There, we give some examples of integrable systems in Poisson geometry and run Hamiltonian Poisson integrators on an integrable system to provide numerical illustrations of the theorem \ref{thm:estimates}. The section \ref{sec:stability_ell} investigates the behavior of Hamiltonian Poisson integrators on periodic orbits coming from ellipticity properties of the Hamiltonian vector field. The theorem \ref{thm:stability_periodic} provides a stability property of Hamiltonian Poisson integrators around elliptic singularities. The toy example of the Euler symplectic scheme applied to the harmonic oscillator is examined. To finish, we exhibit a non-trivial example of a Hamiltonian system where the theorem \ref{thm:estimates} does not apply. We use invariant theory of Hamiltonian systems to pursue a theoretical study of this dynamics one hand, and to comment the behavior of a Hamiltonian Poisson integrator on this system on the other hand.

\section{Hamiltonian Poisson integrators on integrable systems}\label{sec:integrable}

Let $M$ be a smooth manifold, $H \in \func{M}$ and $\{.,.\}$ Poisson brackets on $M$. We refer to \cite{Marle1987} for definitions and properties of Hamiltonian systems in Poisson geometry.

\subsection{Hamiltonian Poisson integrators}

Let $I$ be any real open interval containing $0$. We define an integrator as a family $\phi = (\phi_{\epsilon})_{\epsilon \in I}$ of smooth diffeomorphisms of $M$ such that for any $x \in M$, the map $
\begin{array}{ccc}
I &\to& M\\
\epsilon &\mapsto& \phi_{\epsilon}(x)
\end{array}
$
is smooth.

\begin{remark}[Backward error analysis]\label{rem:BEO}
A key point is the existence of a formal series $\tilde X = \sum\limits_{i \in \N} \frac{\epsilon^i}{i!} X_i $ with coefficients in $\mathcal{X}(M)$ such that the autonomous flow of $\tilde X$ at time $\epsilon$ equals $\phi_\epsilon$: for any $f \in \func{M}$, for any $k \in \N$,
\begin{equation}
f \circ \phi_\epsilon = f \circ \varphi^{\tilde X^k}_\epsilon + \smallO{k} \in \s{\func{M}}{\epsilon},
\end{equation}
where $\tilde X^k = \sum\limits_{i=0}^k \frac{\epsilon^i}{i!} X_i$. $\tilde X$ is called the \emph{modified vector field} of the integrator $\phi$. The use of $\tilde X$ to study the properties of $\phi$ is precisely called the \emph{backward error analysis}.
\end{remark}

The integrator $\phi$ is said to be of order $k$ for the vector field $X \in \mathcal{X}(M)$ if $\phi$ agrees with the flow $\varphi^X$ up to order $k$: in equation, for any smooth function $f \in \func{M}$,
\begin{equation}
f \circ \phi_\epsilon = f \circ \varphi^X_\epsilon + \smallO{\epsilon^k}.
\end{equation}

\begin{remark}
If $\phi$ is of order $k$ for $X$, then: $\tilde X = X + \frac{\epsilon^k}{k!}X_k + \ldots$, meaning that
\begin{equation}
\tilde X - X = \bigO{\epsilon^k} \in \s{\mathcal{X}(M)}{\epsilon}.
\end{equation}
\end{remark}

In the sequel, "an integrator of order $k$ for the Hamiltonian vector field of $H$" will be shortened as "an integrator of order $k$ for $H$". The following definition is out of \cite[Def. 2]{Oscar2022}.

\begin{definition}[Hamiltonian Poisson integrator]
A Hamiltonian Poisson integrator of order $k$ for $H$ is an integrator $\phi$ of order $k$ for $H$ such that $\phi$ is the time-dependent flow of a Hamiltonian $(H_t)_{t \in I}$. In equation, for any smooth function $f \in \func{M}$ and for any $\epsilon \in I$,
\begin{equation}
\frac{\partial (f \circ \phi_\epsilon) }{\partial \epsilon} = \{f, H_\epsilon \} \circ \phi_\epsilon \in \func{M}.
\end{equation}
\end{definition}

From now, let us denote by $\phi$ a Hamiltonian Poisson integrator for $H$ at order $k$. As a consequence of the Magnus formula of \cite[Sec. 1]{oscar}, there exists a modified Hamiltonian for $\phi$. More precisely, this condition guarantees that the formal vector field of the remark \ref{rem:BEO} is a Hamiltonian vector field for the Poisson structure on $M$: there exists a sequence $(H_i)_{i \in \N} \in \func{M}^{\N}$ such that, by denoting $\tilde H = \sum\limits_{i \in \N} \frac{\epsilon^i}{i!} H_i \in \s{\func{M}}{\epsilon}$, the Hamiltonian vector field of $\tilde H$ is the modified vector field $\tilde X$ of $\phi$. Up to a factor $\epsilon$, $\tilde H$ can\footnote{Note that $\tilde H$ is not unique: any Casimir might be added to $\tilde H$ without changing its Hamiltonian vector field.} be constructed precisely through the Magnus series (see \cite[Sec. 1]{oscar}) of the time-dependent Hamiltonian $(H_t)_{t \in I}$:
\begin{equation}
\tilde H = \frac{1}{\epsilon} \mathcal{M}\left( (H_t)_{t \in I} \right) = H + \epsilon \frac{\partial H}{\partial t}_{| t = 0} + \epsilon^2 \ldots \in \s{\func{M}}{\epsilon}. 
\end{equation}

An important class of examples of Hamiltonian Poisson integrators is given by symplectic integrators. In the general case of Poisson geometry, the Poisson structure foliates the whole space into symplectic leaves. The following proposition ensures the preservation of the symplectic leaves by any Hamiltonian Poisson integrator (\cite{Oscar2022}).
\begin{proposition}\label{prop:Ham_Pois_leaf}
A Hamiltonian Poisson integrator stays on a symplectic leaf along iterations. It preserves consequently any Casimir.
\end{proposition}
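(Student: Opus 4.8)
The plan is to use directly the defining property of a Hamiltonian Poisson integrator: $\phi$ is the time-dependent flow of a Hamiltonian $(H_t)_{t \in I}$. Fix $x \in M$ and consider the smooth curve $\gamma \colon I \to M$ given by $\gamma(\epsilon) = \phi_\epsilon(x)$. By definition its velocity is $\dot\gamma(\epsilon) = X_{H_\epsilon}(\gamma(\epsilon))$, the Hamiltonian vector field of $H_\epsilon$ evaluated at $\gamma(\epsilon)$. Since every Hamiltonian vector field of a Poisson manifold is pointwise tangent to the symplectic leaf through the point at which it is evaluated --- this is the standard description of the symplectic foliation, see \cite{Marle1987} --- the curve $\gamma$ is an integral curve of a time-dependent vector field that is everywhere tangent to the symplectic foliation.

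First I would deduce from this that $\gamma$ stays inside a single leaf. The symplectic foliation of a Poisson manifold is a generalised (Stefan--Sussmann) foliation whose leaves are the orbits of the family of Hamiltonian vector fields, so any curve whose velocity is everywhere tangent to the foliation remains in the leaf through its initial point. The only point requiring care is that leaves are in general merely immersed and may have varying dimension, so tangency must be read pointwise as above; one makes the argument local by invoking Weinstein's splitting theorem near each point of $\gamma$, where the leaf is cut out by the local Casimirs of the transverse factor, and then patches over the compact interval $[0,\epsilon]$ by connectedness. Hence $\phi_\epsilon(x)$ belongs to the leaf $L_x$ through $x$ for every $\epsilon \in I$, and applying this to each successive iterate gives $\phi_\epsilon^{\,n}(x) \in L_x$ for all $n \in \N$, which is the first assertion.

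For the second assertion, let $C \in \func{M}$ be a Casimir, i.e.\ $\{C,\cdot\} \equiv 0$. Then along $\gamma$ one computes
\[
\frac{\mathrm{d}}{\mathrm{d}\epsilon}\,\big(C \circ \phi_\epsilon\big)(x) \;=\; \{C, H_\epsilon\}\big(\phi_\epsilon(x)\big) \;=\; 0,
\]
so $C \circ \phi_\epsilon = C$ for all $\epsilon \in I$, and by iteration $C \circ \phi_\epsilon^{\,n} = C$ for all $n$; alternatively this is immediate from the first assertion, a Casimir being constant on symplectic leaves. The substantive step is therefore not any computation but the foliation-theoretic passage from "the velocity is tangent to the leaves" to "the whole trajectory lies in one leaf" in the possibly singular setting; everything else reduces to a one-line differentiation of the defining relation of a Hamiltonian Poisson integrator.
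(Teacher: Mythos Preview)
The paper does not prove this proposition; it merely states it and refers to \cite{Oscar2022}. So there is no in-paper argument to compare against, and your proof stands on its own.

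Your proof is correct. The Casimir part is the cleanest: the one-line differentiation $\frac{\mathrm{d}}{\mathrm{d}\epsilon}(C\circ\phi_\epsilon)=\{C,H_\epsilon\}\circ\phi_\epsilon=0$ is exactly the right computation and already settles the second assertion without appeal to the first. For leaf-preservation, your route via Stefan--Sussmann integrability and local Weinstein splitting is valid but a bit heavier than necessary. A shorter alternative is to recall that the symplectic leaf through $x$ is, by one of its standard characterisations, precisely the orbit of $x$ under the pseudogroup generated by Hamiltonian flows (equivalently, the set of endpoints of piecewise Hamiltonian paths from $x$); since $\epsilon\mapsto\phi_\epsilon(x)$ is by definition such a path, $\phi_\epsilon(x)\in L_x$ is immediate and no local-to-global patching is needed. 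Another phrasing: $\phi_\epsilon$ is a Poisson automorphism isotopic to the identity through Poisson automorphisms, hence permutes leaves and, by connectedness of $I$, fixes each leaf setwise. Your argument is not wrong, just slightly more elaborate than the situation demands.
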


\subsection{Completely integrable systems in Poisson geometry}

We recall briefly integrable Poisson systems in Poisson geometry after \cite[chap. 12]{LPV2012}. Let us denote by $N$ the dimension of $M$ and $2r$ the rank of $\pi$, i.e. the dimension of any symplectic leaf of maximum dimension.

\begin{definition}[Liouville Integrable system]
Let $F = (F_1, \ldots, F_s)$ be an $s$-tuple of smooth functions on $M$ and suppose that
\begin{itemize}
\item $F$ is independent on a dense open subset,
\item $F$ is in involution,
\item $r+s = N$.
\end{itemize}
Then, $(M, \pi, F)$ is called a Liouville integrable system of dimension $N$ and rank $2r$.
\end{definition}

$F \colon M \to \R{s}$ is called the momentum map. Let us set $M_{(r)}$ to be the open subset where $\pi$ is of rank $2r$ and $U_F$ to be the dense open subset where $F$ is independent.

\begin{theorem}[Action-angle variables]
Let $(M, \pi, F)$ be a Liouville integrable system. Let $m \in U_F \cap M_{(r)}$ and suppose that $F_m = F^{-1}( \{ F(m) \})$ is compact. Let $\mathcal{F}_m$ be any connected component of $F_m$. Then, there exists $\R{}$-valued smooth functions $(a_1, \ldots, a_{N-r})$ and $\R{}/\mathbb{Z}$-valued smooth functions $(\theta_1, \ldots, \theta_r)$ defined in a neighborhood $U$ of $\mathcal{F}_m$ such that
\begin{itemize}
\item the map defined by $(\theta_1, \dots, a_1, \dots, a_{N-r})$ is a diffeomeorphism between $U$ and $\mathbb{T}^r \times B^{N-r}$,
\item the Poisson structure $\pi$ is written
\begin{equation}
\pi = \sum\limits_{i=1}^r \frac{\partial }{\partial \theta_i} \wedge \frac{\partial }{\partial a_i}
\end{equation}
in those coordinates,
\item The functions $F_1, \ldots, F_{N-r}$ depend on the functions $a_1, \ldots, a_{N-r}$ only.
\end{itemize}
\end{theorem}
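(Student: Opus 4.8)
\emph{Plan of proof.} The strategy is to reduce to the classical Arnold--Liouville theorem on each symplectic leaf and then handle the transverse (Casimir) directions, following \cite[Chap. 12]{LPV2012}. Write $s = N-r$ for the number of components of $F$, and for $f \in \func{M}$ let $X_f = \{\cdot,f\}$ denote its Hamiltonian vector field.

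\emph{Step 1: geometry of the fibre $\mathcal{F}_m$.} Since $m \in U_F$, the map $F$ is a submersion near $m$, so $\mathcal{F}_m$ is a compact connected submanifold of dimension $N-s = r$. As $m \in M_{(r)}$, the leaf through $m$ is $2r$-dimensional, and involutivity of $F$ shows that $X_{F_1},\dots,X_{F_s}$ are tangent to $\mathcal{F}_m$, pairwise commute, and span an involutive distribution of rank $r$ on $U_F\cap M_{(r)}$ whose integral manifolds are exactly the connected components of the fibres of $F$; in particular they span $T\mathcal{F}_m$. By compactness these vector fields are complete, so they define a locally free action of $\mathbb{R}^r$ on $\mathcal{F}_m$. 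The standard period-lattice argument then identifies $\mathcal{F}_m$ with $\mathbb{T}^r$ and, after fixing a basis $\gamma_1,\dots,\gamma_r$ of cycles generating the period lattice, produces angle functions $\theta_1,\dots,\theta_r \in \mathbb{R}/\mathbb{Z}$ on $\mathcal{F}_m$ in which the leafwise flows are linear.

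\emph{Step 2: the neighborhood $U$ and the action functions.} By the Weinstein splitting theorem, a neighborhood of $m$ fibres over an $(N-2r)$-dimensional space of Casimirs with $2r$-dimensional symplectic fibres (the leaves), and one can arrange — after replacing $F$ by an equivalent momentum map, which does not affect the statement — that $N-2r$ components of $F$ are Casimirs while the remaining $r$ restrict to an integrable system in the classical sense on each leaf. On a fixed leaf, a primitive $\lambda$ of the symplectic form exists near the Lagrangian torus $\mathcal{F}_m$, and the classical construction gives leafwise action functions $a_1,\dots,a_r$ as the periods of $\lambda$ along $\gamma_1,\dots,\gamma_r$, normalized to match the $\mathbb{R}/\mathbb{Z}$-valued $\theta_i$. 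Setting $a_{r+1},\dots,a_{N-r}$ to be the $N-2r$ Casimir components of $F$, one obtains $s = N-r$ functionally independent functions $a_1,\dots,a_{N-r}$, depending on $F$ only, and one checks that the combined map $(\theta_1,\dots,\theta_r,a_1,\dots,a_{N-r})$ is a diffeomorphism from a neighborhood $U$ of $\mathcal{F}_m$ onto $\mathbb{T}^r \times B^{N-r}$.

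\emph{Step 3: normal form of $\pi$, and conclusion.} It remains to verify $\{a_i,a_j\}=0$, $\{\theta_i,\theta_j\}=0$, and $\{\theta_i,a_j\}=\delta_{ij}$. The first holds because the $a_i$ are functions of the involutive family $F$. For the others, $X_{a_j}$ is, up to the constant-coefficient linear change of Step 1, the infinitesimal generator of the $j$-th circle of the torus action for $j \le r$, hence has constant components in the $\theta$-variables and annihilates all the $a$'s, while $X_{a_j}=0$ for $j>r$ since $a_j$ is a Casimir; the normalization of the periods in Step 2 is exactly what forces $X_{a_j}=\partial/\partial\theta_j$, i.e. $\{\theta_i,a_j\}=\delta_{ij}$, and makes $\{\theta_i,\theta_j\}$ constant — this constant vanishes because $\theta$ is $\mathbb{R}/\mathbb{Z}$-valued (a nonzero constant bracket would contradict single-valuedness around the cycles $\gamma_i$). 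Hence $\pi = \sum_{i=1}^r \frac{\partial}{\partial\theta_i}\wedge\frac{\partial}{\partial a_i}$, and the last clause — that $F_1,\dots,F_{N-r}$ depend on $a_1,\dots,a_{N-r}$ only — was established in Step 2, $a$ and $F$ generating the same ring.

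\emph{Main obstacle.} The delicate point is Step 2: one must run the symplectic action-angle construction \emph{in families}, proving that the primitive $\lambda$, the basis cycles, and hence the period integrals can be chosen to depend smoothly on the point of the base $B^{N-r}$ (including the Casimir directions), and that the resulting period map is a local diffeomorphism onto a ball. This uses that $F$ is a submersion with compact connected fibres near $\mathcal{F}_m$, so the fibration is locally trivial by Ehresmann's theorem and the classes $[\lambda|_{\text{leaf}}]$ vary smoothly; the non-degeneracy of the period map is the Poisson analogue of the independence of the classical action variables. The remainder is the standard Arnold--Liouville bookkeeping.
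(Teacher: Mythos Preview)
The paper does not actually prove this theorem: it is stated as background, with the proof deferred to \cite[Chap.~12]{LPV2012}. So there is no ``paper's own proof'' to compare against; your sketch is in fact a condensed version of precisely the argument in that reference (reduce to the leaf via Weinstein splitting, run classical Arnold--Liouville on each leaf, take the remaining $N-2r$ coordinates to be Casimirs, and check smoothness in families via Ehresmann).

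One genuine soft spot in your Step~3: the reason you give for $\{\theta_i,\theta_j\}=0$ is not correct. Knowing $\{\theta_i,a_k\}=\delta_{ik}$ and Jacobi only shows that $\{\theta_i,\theta_j\}$ is constant along each torus, i.e.\ is a function $c_{ij}(a)$; single-valuedness of the $\theta_i$ does not force $c_{ij}=0$. In the standard argument one uses closedness of the leafwise symplectic form (equivalently, Jacobi for $\pi$) to see that the $c_{ij}(a)$ satisfy a cocycle condition, and then one corrects the angles by functions of $a$, replacing $\theta_i$ by $\theta_i+g_i(a)$, to kill $c_{ij}$. After this adjustment the normal form $\pi=\sum_{i=1}^r \partial_{\theta_i}\wedge\partial_{a_i}$ holds. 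Apart from this, and the acknowledged delicacy in Step~2 of making the period integrals depend smoothly on the base (which you flag correctly), the outline is sound and matches the cited source.
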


The functions $\theta_1, \ldots, \theta_r$ are called \textit{angle coordinates}, the functions $a_1, \ldots, a_r$ are called \textit{action coordinates} and the functions $a_{r+1}, \ldots, a_{N-r}$ are called \textit{transverse coordinates}.

\subsection{Exponentially long times estimates for the discrete trajectory}

This section is a generalization of \cite[Sec. X]{Hairer2006} and the proof follows the same pattern. We prove that a Hamiltonian Poisson integrator of order $k$ applied to a Liouville integrable system preserves the momentum map at order $k$ over exponentially long time.

Let $\phi$ be a Hamiltonian Poisson integrator of order $k$ for $H$. From now on, we assume $M$, $H$, $\phi$ and the Poisson brackets to be real analytic. We also assume $H$ to be completely integrable, meaning that there exists a momentum map $F$ such that its first component is $H$ and $(M, \pi, F)$ is Liouville integrable. The momentum map is assumed to be real analytic as well.

\begin{theorem}[Long run estimates]\label{thm:estimates}
Let $x_0 \in M$. If $F^{-1}( \{ F(x_0) \})$ is compact, then there exists a set $\Set \subset \R{r}$ of measure zero for the Lebesgue measure such that for any $a \in Im(F) \setminus \Set \subset \R{r}$, the following holds. There exist positive constants $c_0$, $c$, $C$, $\epsilon_0$ and $\mu_0$ such that for all $0 \leq  \epsilon \leq \epsilon_0$ and for all $\mu \leq \mu_0$, every discrete trajectory starting with
\begin{equation}\label{eq:thmestimates1}
\| F(x_0) - a \| \leq c_0 \epsilon^{2 \mu}
\end{equation}
satisfies
\begin{equation}\label{eq:thmestimates2}
n \epsilon \leq \exp( c \epsilon^{-\frac{\mu}{r+1}} ) \Rightarrow \| F(x_n) - F(x_0) \| \leq C \epsilon^k,
\end{equation}
where $(x_n)_{n \in \N} \in M^{\N}$ are the iterates of $\phi_\epsilon$.
\end{theorem}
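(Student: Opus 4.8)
The plan is to mirror the classical backward error analysis argument for symplectic integrators applied to integrable systems, as in \cite[Sec. X]{Hairer2006}, working in the action-angle coordinates provided by the Action-angle variables theorem. First I would restrict attention to a neighborhood $U \cong \mathbb{T}^r \times B^{N-r}$ of a connected component $\mathcal{F}_m$ of the compact fibre $F^{-1}(\{F(x_0)\})$, in which the Poisson structure takes the Darboux-like form $\pi = \sum_{i=1}^r \partial_{\theta_i} \wedge \partial_{a_i}$ and $H$ depends only on the action and transverse coordinates. By Proposition \ref{prop:Ham_Pois_leaf} the integrator preserves the symplectic leaf, hence the transverse coordinates $a_{r+1}, \ldots, a_{N-r}$ are exactly conserved, so the genuine dynamics happens in the symplectic slice $\mathbb{T}^r \times B^r$ with the standard symplectic form; the problem is thereby reduced to the symplectic case treated in Hairer–Lubich–Wanner, but I would need to redo the estimates because the modified Hamiltonian here comes from the Magnus series $\tilde H = \frac{1}{\epsilon}\mathcal{M}((H_t)_{t\in I})$ rather than from the BCH/symplectic construction.

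Next I would carry out the standard truncation-and-optimization step: the modified Hamiltonian $\tilde H = H + \mathcal{O}(\epsilon^k)$ is only a formal series, so I would fix an optimal truncation index $N(\epsilon) \sim c\,\epsilon^{-1/(r+1)}$-ish (adjusted by the exponent $\mu$), giving a genuine analytic Hamiltonian $\tilde H_{N}$ whose flow differs from $\phi_\epsilon$ by an exponentially small amount $\exp(-c\epsilon^{-\mu/(r+1)})$ per step, using the analyticity hypotheses on $M$, $H$, $\phi$ and $\pi$ together with Cauchy estimates on a complex strip around $\mathbb{T}^r \times B^r$. This requires quantitative control of the growth of the vector fields $X_i$ (equivalently the Magnus coefficients $H_i$) in a shrinking family of complex neighborhoods; I would invoke the analytic estimates already available for the Magnus series in \cite[Sec. 1]{oscar} to bound $\|H_i\|$ on a strip of width $\rho$ by $i!\, (C/\rho)^i$ and then optimize over $\rho$ and the truncation order.

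Then comes the KAM/Nekhoroshev input: since $\tilde H_N = H + \mathcal{O}(\epsilon^k)$ is a small analytic perturbation of the integrable $H$ on $\mathbb{T}^r \times B^r$, and since we additionally assume $H$ is non-degenerate on a full-measure set of action values (this is where the measure-zero exceptional set $\Set$ enters — the resonant/degenerate actions, and where I would cite \cite{Nekhoroshev1977, Shang1999}), the perturbed system has an invariant torus $\epsilon^k$-close to the torus $\{a = a_0\}$ through $x_0$, provided the initial action $F(x_0)$ is within $c_0\epsilon^{2\mu}$ of a Diophantine value $a$. On that invariant torus the actions of $\tilde H_N$ — which, up to $\mathcal{O}(\epsilon^k)$, are the components of $F$ — stay within $C\epsilon^k$ of their initial values for all time. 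Combining: over $n$ steps with $n\epsilon \le \exp(c\epsilon^{-\mu/(r+1)})$, the accumulated drift between $\phi_\epsilon^n(x_0)$ and the flow of $\tilde H_N$ is $n \cdot \exp(-c'\epsilon^{-\mu/(r+1)}) \le \mathcal{O}(\epsilon^k)$ by the choice of time horizon, and adding the $\mathcal{O}(\epsilon^k)$ invariance of $F$ along the exact modified flow yields $\|F(x_n) - F(x_0)\| \le C\epsilon^k$. Finally I would note that the compactness of the fibre guarantees the discrete trajectory cannot escape the chart $U$ within this time span — precisely because $F$ stays bounded and the fibre is compact — so all the local estimates are legitimately applicable; this replaces the a priori "preserved compact set" assumption flagged in the introduction.

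The main obstacle I expect is the second step: obtaining the exponentially small per-step error with the right exponent $\mu/(r+1)$ for a \emph{Poisson} (not symplectic) integrator, because the modified vector field is built from the Magnus series of a time-dependent Hamiltonian rather than from composition formulas, so the analytic estimates of \cite[Sec. X]{Hairer2006} do not transfer verbatim and must be re-derived from the Magnus estimates of \cite[Sec. 1]{oscar}; keeping track of the Casimir ambiguity in $\tilde H$ (harmless for the dynamics but a nuisance for norms) and ensuring the non-degeneracy condition needed for KAM is generic enough to give only a measure-zero exceptional $\Set$ are the secondary technical points.
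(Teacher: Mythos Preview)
Your overall architecture is right --- pass to action-angle coordinates, freeze the transverse directions via Proposition~\ref{prop:Ham_Pois_leaf}, and work on the symplectic slice $\mathbb{T}^r \times B^r$ --- and this is exactly how the paper begins. But you then take a detour that the paper avoids entirely. You plan to re-derive the exponentially-small-per-step estimate from scratch via analytic bounds on the Magnus coefficients, and you flag this as the main obstacle. The paper's observation is that this is unnecessary: once restricted to the leaf, a Hamiltonian Poisson integrator \emph{is} a symplectic integrator in the ordinary sense (on the leaf with its induced symplectic form), and therefore Theorem~4.7 of \cite[Sec.~X.5]{Hairer2006} applies to it verbatim as a black box. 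The modified Hamiltonian on the leaf is just the restriction of $\tilde H$, and Hairer--Lubich--Wanner's analytic estimates do not care whether that modified Hamiltonian arose from a BCH expansion, a Magnus series, or anything else --- they only use symplecticity and analyticity of $\phi_\epsilon$. So your ``main obstacle'' dissolves: there is nothing to re-derive, and the Magnus-specific estimates you worry about are never needed.

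What remains after invoking the symplectic theorem is bookkeeping that your sketch leaves vague. The paper defines $\Set$ concretely as the preimage, under the frequency map $a \mapsto (\partial_{a_1} H(a), \ldots, \partial_{a_r} H(a))$, of the set of vectors failing strong non-resonance, and checks this preimage has measure zero. It then removes the Diophantine exponent $\nu$ from the final bound by taking $\mu_0 = \min(k, r+1)$ and using $\exp(c\epsilon^{-\mu/(\nu+r+1)}) \le \exp(c\epsilon^{-\mu/(r+1)})$. Your description of $\Set$ as ``resonant/degenerate actions'' is in the right spirit but not sharp, and you do not mention the elimination of $\nu$. Finally, your KAM/invariant-torus picture is slightly off-target: the cited symplectic result is a Nekhoroshev-type exponential stability statement for the actions, not a persistence-of-tori statement, so there is no need to locate a nearby invariant torus and track distance to it.
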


\begin{proof}
Let $(\theta,p)$ be action-angle coordinates at $x_0$. We set 
\begin{equation}
\Set = \{ a \in \R{s}, \quad (\frac{\partial H}{\partial a_1}(a), \ldots,  \frac{\partial H}{\partial a_r}(a)) \in \R{r} \text{ is not strongly non resonant } \}.
\end{equation}
First, we prove that the Lebesgue measure of $\Set$ is zero. The set of strongly non-resonant frequencies is of full measure and since the differential of $H$ is non-degenerate almost everywhere, the map
\begin{equation}
(\frac{\partial H}{\partial a_1}, \ldots,  \frac{\partial H}{\partial a_r}) \colon \R{s} \to \R{r}
\end{equation}
sends sets of zero measure onto sets of zero measure. Consequently, $\Set$ is of measure zero in any action-angle variables.

Let $a \in \R{s} \setminus \Set$ and let us set $\omega = (\frac{\partial H}{\partial a_1}(a), \ldots,  \frac{\partial H}{\partial a_r}(a)) \in \R{r}$. By the definition of strong non resonance, there exists $\nu>0$ and $\gamma>0$ such that
\begin{equation}
\forall \kappa \in \mathbb{Z}^r \setminus \{0 \}, \quad | <\kappa, \omega>| \geq \gamma | \kappa |^{-\nu}.
\end{equation}
Since a Hamiltonian Poisson integrator restricted to a symplectic leaf is a symplectic integrator, we apply Theorem 4.7 of \cite[Sec. X.5]{Hairer2006}: there exist positive constant $c_0$, $c$, $C$ and $\epsilon_0$ such that the following holds. For all $0 \leq \epsilon \leq \epsilon_0$ and for all $\mu \leq \min(k,\nu + r + 1)$, every $x_0$ such that $\| F (x_0) - a \| \leq c_0 e^{2 \mu}$ satisfies
\begin{equation}
n \epsilon \leq \exp(c \epsilon^{-\frac{\mu}{\nu + r +1}}) \Rightarrow \| F(x_n) - F(x_0) \| \leq C \epsilon^k.
\end{equation}
Now, we get rid of the constant $\nu$ coming from the non resonance of $a$. We set $\mu_0 = \min(k, r+1)$. Since $\mu_0 \leq \min(k,\nu + r + 1)$ and $\exp(c \epsilon^{-\frac{\mu}{\nu + r +1}}) \leq \exp(c \epsilon^{-\frac{\mu}{r +1}})$, this proves the theorem.
\end{proof}

\begin{remark}
Although formulated on a manifold, this result is local. First, the constants $c_0$, $c$, $C$ and $\epsilon_0$ depend on the symplectic leaf of $x_0$. Furthermore, the derivation of analysis estimates relies on action-angle coordinates, and such coordinates exist in general only in a neighborhood of a Liouville torus. Several tools might be required to globalize this result: see \cite{Miranda2010} for the globalisation of action-angle coordinates in Poisson geometry and \cite{Nekhoroshev1977} for more global estimates on nearly-integrable systems.
\end{remark}

The theorem \ref{thm:estimates} is a stability result for the long run iterates of a Hamiltonian Poisson integrator when applied to a Liouville integrable system. The iterates of any Hamiltonian Poisson integrator oscillate on the long run around Liouville tori. The way we stated this theorem had the lightest notations. However, this statement can be sharpened for free using the proposition \ref{prop:Ham_Pois_leaf}: since a Hamiltonian Poisson integrator remains on a symplectic leaf, the $r-s$ last components of the momentum map $F$ are exactly preserved, providing an improvement of the equations \eqref{eq:thmestimates1} and \eqref{eq:thmestimates2}. Indeed, two foliations are involved here: the symplectic foliation of the Poisson structure and the foliation by Liouville tori coming from the integrability of the Hamiltonian system. We come back to these foliations at the end of the section \ref{sec:LV_integrable}.

\section{The Lotka-Volterra model}\label{sec:model}

In this section, we introduce the Lotka-Volterra model. We explain the meaning of the equations, provide their Hamiltonian structure when it exists, and use this class of ODEs to give an example of an integrable systems in Poisson geometry in which one can benchmark our Hamiltonian Poisson integrator and illustrate the stability theorem \ref{thm:estimates}.

\subsection{The evolution equation and related Hamiltonian structures}\label{sec:evo_eq}

The differential equation
\begin{equation}\label{eq:LV}
\dot{x}_j = \varepsilon_j x_j + \sum\limits_{k=1}^{n} a_{jk} x_j x_k, \quad 1 \leq j \leq N,
\end{equation}
has been introduced in \cite{Volterra1931} to describe $n$ biological species competing. In this model, $x_j$ is a dynamical quantity standing for the number of individuals of the species $j$, $a_{jk}$ are coefficients describing the action of the species $k$ on the evolution of the species $j$ and the $\varepsilon_j$ are environment parameters: $\varepsilon_j >0$ means that the species $j$ increases thanks to the environment while $\varepsilon_j<0$ indicates that the species $j$ decreases because of it. $\varepsilon_j = 0$ means that without any interaction with other species, the population of the species $j$ remains constant.

\begin{remark}[Applications of Lotka-Volterra systems]\label{rk:applications}
Lotka-Volterra dynamical systems have been studied in many fields to model various phenomena, e.g. population dynamics \cite{Anish2024}, biology \cite{Ogundero2025}, control theory for biology \cite{Jones2020} or ecology \cite{Dopson2024}.
\end{remark}

In the rest of this article, the equation \eqref{eq:LV} has to be understood as restricted to the positive open quadrant 
$$\mathcal{Q}^+ =\{ (x_i)_{1 \leq i \leq N} \in \R{N}, \quad \forall 1 \leq i \leq N, \quad x_i >0 \}.$$

The Hamiltonian structure of Lotka-Volterra systems is well-known (\cite{Plank1995,Fernandes1998}) and has been used to study symmetries and integrability of such dynamical systems (\cite{vanhaecke2016}). The condition for an ordinary differential equation of the form \eqref{eq:LV} to be Hamiltonian is spelled in the following proposition.

\begin{proposition}\label{prop:Ham_structure}
Let us assume that the matrix $A = (a_{ij})_{1 \leq i, j \leq N}$ is anti-symmetric. If there exists $q \in \R{N}$ such that 
\begin{equation}\label{eq:fixed}
\forall 1 \leq k \leq N, \quad \varepsilon_j + a_{jk}q_k = 0,
\end{equation}
then \eqref{eq:LV} is Hamiltonian for the cluster Poisson structure 
\begin{equation}\label{eq:cluster}
\{ x_i, x_j \} = a_{ij} x_i x_j
\end{equation}
and the Hamiltonian
\begin{equation}\label{eq:H}
H(x) = \sum\limits_{j=1}^N (x_j - q_j \log x_j).
\end{equation}
\end{proposition}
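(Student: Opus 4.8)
The plan is to check directly that the Hamiltonian vector field $X_H$ of \eqref{eq:H} with respect to the bracket \eqref{eq:cluster} agrees with the right-hand side of \eqref{eq:LV}. Two points require verification: first, that \eqref{eq:cluster}, extended to all of $\func{\mathcal{Q}^+}$ by bilinearity and the Leibniz rule, is a genuine Poisson bracket (it is obviously skew-symmetric, since $\{x_j,x_i\}=a_{ji}x_jx_i=-a_{ij}x_ix_j$, so the only issue is the Jacobi identity); second, that $\{x_j,H\}=\varepsilon_j x_j+\sum_{k=1}^N a_{jk}x_jx_k$ for each $j$. Since a derivation of $\func{\mathcal{Q}^+}$ is determined by its action on the coordinate functions $x_1,\dots,x_N$, the second point is exactly the assertion that $X_H$ is the Lotka--Volterra field.

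For the Jacobi identity I would pass to the coordinates $y_i:=\log x_i$, which define a diffeomorphism from $\mathcal{Q}^+$ onto $\R{N}$. In these coordinates \eqref{eq:cluster} becomes $\{y_i,y_j\}=a_{ij}$; that is, \eqref{eq:cluster} is the pushforward of the constant-coefficient bivector $\sum_{i<j} a_{ij}\,\partial_{y_i}\wedge\partial_{y_j}$ --- and anti-symmetry of $A$ is precisely what lets this bivector encode the whole matrix. A constant bivector on an open subset of $\R{N}$ trivially satisfies Jacobi, and the Jacobi identity is diffeomorphism-invariant; hence \eqref{eq:cluster} is Poisson. (Alternatively one checks directly that the cyclic sum $\{x_i,\{x_j,x_k\}\}+\{x_j,\{x_k,x_i\}\}+\{x_k,\{x_i,x_j\}\}$ equals $\bigl(a_{jk}(a_{ij}+a_{ik})+a_{ki}(a_{jk}+a_{ji})+a_{ij}(a_{ki}+a_{kj})\bigr)x_ix_jx_k$, which vanishes by skew-symmetry of $A$.)

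For the second point, the Leibniz rule gives $\{x_j,\log x_l\}=x_l^{-1}\{x_j,x_l\}=a_{jl}x_j$, hence
\begin{equation}
\{x_j,H\}=\sum_{l=1}^N\bigl(\{x_j,x_l\}-q_l\{x_j,\log x_l\}\bigr)=x_j\sum_{l=1}^N a_{jl}x_l\;-\;x_j\sum_{l=1}^N a_{jl}q_l.
\end{equation}
Condition \eqref{eq:fixed} reads $\sum_l a_{jl}q_l=-\varepsilon_j$, so the last term is $\varepsilon_j x_j$ and we recover $\{x_j,H\}=\varepsilon_j x_j+\sum_l a_{jl}x_jx_l$, the right-hand side of \eqref{eq:LV}. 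This finishes the proof.

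I expect no serious obstacle here: the computation is a one-line application of the Leibniz rule, and the only structural fact needed --- that \eqref{eq:cluster} is honestly Poisson --- is classical and is dispatched by the logarithmic chart. The role of the hypotheses is transparent in hindsight: $q$ is just a fixed point of the Lotka--Volterra field \eqref{eq:LV}, and the summand $-q_j\log x_j$ of $H$ is calibrated so that its contribution to $\{x_j,H\}$ produces exactly the linear term $\varepsilon_j x_j$ of the dynamics.
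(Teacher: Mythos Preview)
Your argument is correct. The logarithmic change of coordinates reducing \eqref{eq:cluster} to a constant bivector is the standard way to see that the cluster bracket is Poisson, and your computation of $\{x_j,H\}$ is clean and accurate; you also correctly read \eqref{eq:fixed} as $\sum_k a_{jk}q_k=-\varepsilon_j$ (the quantifier in the statement is evidently a typo for ``$\forall j$'' with summation on $k$).

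As for comparison with the paper: there is nothing to compare. The paper does not prove this proposition at all --- it is stated as a known fact with references to Plank and Fernandes, and is immediately followed by two remarks. Your write-up therefore supplies what the paper omits, and does so by exactly the elementary verification one would expect.
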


\begin{remark}
Physically, the anti-symmetric matrix means that the interaction between two species is reciprocal: $a_{ij} = -a_{ji}$. The existence of $q$ means the existence of a fixed point in the dynamics, in other words: an equilibrium in the population dynamics.
\end{remark}

\begin{remark}
The Hamiltonian given by Equation \eqref{eq:H} is convex and has compact levels. The trajectory $x(t)$ is therefore defined at all time.
\end{remark}

In the rest of this article, we will always assume the equations \eqref{eq:LV} to be Hamiltonian.

\begin{proposition}\label{prop:cas}
Let $v \in \ker A$. Then, the map
\begin{equation}
\begin{array}{ccc}
\mathcal{Q}^+ &\to& \R{} \\
x &\mapsto& \Pi_{k=1}^N x_k^{v_k}
\end{array}
\end{equation}
is a Casimir.
\end{proposition}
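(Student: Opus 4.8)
The plan is to verify directly that $C := \prod_{k=1}^N x_k^{v_k}$ Poisson-commutes with every function on $\mathcal{Q}^+$. Since $\{\cdot,\cdot\}$ is a biderivation and the coordinate functions $x_1,\dots,x_N$ generate $\func{\mathcal{Q}^+}$ in the sense that a function brackets to zero with everything as soon as it brackets to zero with each $x_j$, it suffices to show $\{C,x_j\}=0$ for every $1\leq j\leq N$.

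First I would apply the Leibniz rule and the defining relation \eqref{eq:cluster}:
\[
\{C, x_j\} \;=\; \sum_{i=1}^N \frac{\partial C}{\partial x_i}\,\{x_i,x_j\} \;=\; \sum_{i=1}^N \frac{\partial C}{\partial x_i}\, a_{ij}\, x_i x_j .
\]
On $\mathcal{Q}^+$ all the $x_i$ are strictly positive, so $C$ is a well-defined smooth function there and $\dfrac{\partial C}{\partial x_i} = \dfrac{v_i}{x_i}\,C$. Substituting, the factors $x_i$ cancel and one is left with
\[
\{C, x_j\} \;=\; \sum_{i=1}^N \frac{v_i}{x_i}\,C\, a_{ij}\, x_i x_j \;=\; C\, x_j \sum_{i=1}^N v_i\, a_{ij}.
\]

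The final step is to identify $\sum_{i=1}^N v_i a_{ij} = (v^{\top}A)_j$; since $A$ is antisymmetric this equals $-(Av)_j$, which vanishes because $v\in\ker A$. Hence $\{C,x_j\}=0$ for all $j$, and $C$ is a Casimir. There is no genuine obstacle in this argument; the only point worth a remark is that restricting to the positive quadrant $\mathcal{Q}^+$ is exactly what makes the power function with real exponents $v_k$ smooth, and that the Poisson bivector $\pi$ acts as a biderivation on all smooth functions (not merely polynomials), so the Leibniz computation above is legitimate.
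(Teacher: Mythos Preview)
Your argument is correct: the Leibniz/chain-rule computation reduces $\{C,x_j\}$ to $C\,x_j\,(A^{\top}v)_j=-C\,x_j\,(Av)_j=0$, and vanishing on the coordinate functions forces the Hamiltonian vector field $X_C$ to vanish, hence $C$ is a Casimir. The paper states this proposition without proof, so there is nothing to compare against; your direct verification is exactly the standard (and essentially only) way to check it.
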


\subsection{Hamiltonian Poisson integrators for Lotka-Volterra systems}\label{sec:Pois_int}

A birealisation for the cluster Poisson structure \eqref{eq:cluster} has been given in \cite{oscar}. We recall it here:
\begin{equation}\label{eq:bireal_cluster}
\left\{
\begin{array}{ccccccc}
\alpha &\colon& \R{2N} \to \R{N} &\colon& (x,p) &\mapsto&  \left( e^{-\frac{1}{2} \sum\limits_{i=1}^N a_{ij}x_i p_i }x_j \right)_{1 \leq j \leq N}\\
\beta &\colon& \R{2N} \to \R{N} &\colon& (x,p) &\mapsto& \hspace{-1 cm} \alpha(x,-p)
\end{array}
\right.
\end{equation}
This birealisation provides Hamiltonian Poisson integrators at any order for any Hamiltonian system on cluster Poisson structures. We stick to the order $1$: for any Hamiltonian $H \in \func{\R{n}}$, the birealisation \eqref{eq:bireal_cluster} gives the integrator
\begin{equation}\label{eq:Pois_Ham_1}
\begin{array}{rl}
x_n &= \alpha(y_n, \epsilon \cdot \nabla_{y_n} H)\\
x_{n+1} &= \beta(y_n, \epsilon \cdot \nabla_{y_n} H) 
\end{array},
\end{equation}
where $x_{n+1}$ is the $n+1$-th iteration computed out of $x_n$ through the intermediary point $y_n$. Note that this method is implicit: at each iteration, $y_n$ is computed out of $x_n$ through a nonlinear equation. $\epsilon$ is the time-step, chosen small enough so that $y_n$ is well-defined.

\begin{remark}
Let us denote by $\phi_\epsilon$ the integrator of the equation \eqref{eq:Pois_Ham_1}. Using symplectic groupoid theory \cite{Weinstein1987}, one can prove that any singularity of the Hamiltonian vector field of $H$ is a fixed point for $\phi_\epsilon$. In equation:
\begin{equation}
\forall x \in \R{N}, \left[ X_H(x) = 0 \Rightarrow \phi_\epsilon(x) = x \right].
\end{equation}
This will matter in the section \ref{sec:stability_ell}, in particular for the theorem \ref{thm:stability_periodic} and its consequences, where we investigate the numerical behavior of $\phi_\epsilon$.
\end{remark}

The following proposition is proven in \cite{oscar}.

\begin{proposition}
The integrator \eqref{eq:Pois_Ham_1} is a Hamiltonian Poisson integrator of order $1$ for $H$. Its time-dependent Hamiltonian is given by
\begin{equation}
H_t \colon 
\begin{array}{cccc}
\R{N} & \to &  \R{} &\\
x & \mapsto & H(y), & \text{ where } y \text{ is implicitly defined by } x = \alpha (y, t \nabla_y H).
\end{array}
\end{equation}
\end{proposition}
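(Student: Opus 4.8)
The plan is to verify the two defining properties of a Hamiltonian Poisson integrator of order $1$ for $H$ separately: (i) that $\phi_\epsilon$ agrees with the Hamiltonian flow $\varphi^{X_H}_\epsilon$ up to order $1$ in $\epsilon$, and (ii) that $\phi_\epsilon$ is the time-$\epsilon$ flow of the time-dependent Hamiltonian $(H_t)_{t\in I}$ given in the statement, i.e. that $\tfrac{\partial}{\partial\epsilon}(f\circ\phi_\epsilon)=\{f,H_\epsilon\}\circ\phi_\epsilon$ for every $f\in\func{\R{N}}$. Since the underlying construction comes from the birealisation \eqref{eq:bireal_cluster}, I would lean on the general principle that a birealisation $(\alpha,\beta)$ of a Poisson manifold induces Hamiltonian Poisson integrators via $x_n=\alpha(y_n,\epsilon\nabla_{y_n}H)$, $x_{n+1}=\beta(y_n,\epsilon\nabla_{y_n}H)$; the work is to make this concrete for the cluster structure and to identify $H_t$ explicitly.

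First I would establish the order-$1$ consistency. At $\epsilon=0$ one has $\alpha(y,0)=\beta(y,0)=y$, so $y_n\to x_n$ and $x_{n+1}\to x_n$, giving $\phi_0=\mathrm{id}$. Differentiating the implicit relation $x_n=\alpha(y_n,\epsilon\nabla_{y_n}H)$ at $\epsilon=0$ yields $\tfrac{\partial y_n}{\partial\epsilon}|_0=-\partial_p\alpha(x_n,0)\cdot\nabla_{x_n}H$, and then differentiating $x_{n+1}=\beta(y_n,\epsilon\nabla_{y_n}H)=\alpha(y_n,-\epsilon\nabla_{y_n}H)$ at $\epsilon=0$ gives $\tfrac{\partial x_{n+1}}{\partial\epsilon}|_0 = \partial_y\alpha(x_n,0)\cdot\tfrac{\partial y_n}{\partial\epsilon}|_0 - \partial_p\alpha(x_n,0)\cdot\nabla_{x_n}H = -2\,\partial_p\alpha(x_n,0)\cdot\nabla_{x_n}H$. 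A short computation of $\partial_p\alpha$ at $p=0$ from \eqref{eq:bireal_cluster} shows this equals $\sum_{i,j}a_{ij}x_i x_j \tfrac{\partial H}{\partial x_i}\tfrac{\partial}{\partial x_j}$, i.e. exactly $X_H$ for the bracket \eqref{eq:cluster}; hence $\phi$ is of order $1$ for $H$.

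Next I would prove the time-dependent Hamiltonian property. The key observation is that $\phi_\epsilon$ admits the factorisation $\phi_\epsilon = \beta_\epsilon\circ\alpha_\epsilon^{-1}$, where $\alpha_\epsilon(y):=\alpha(y,\epsilon\nabla_y H)$ and $\beta_\epsilon(y):=\beta(y,\epsilon\nabla_y H)$ — this is precisely the statement that $x=\alpha_\epsilon(y)$ defines $y$ implicitly and $\phi_\epsilon(x)=\beta_\epsilon(y)$. One then uses the fact that $(\alpha,\beta)$ is a birealisation: the Lagrangian/coisotropic nature of the graph of $(\alpha,\beta)$ inside the relevant symplectic/cotangent model forces $\phi_\epsilon$ to be Poisson, and more precisely a generating-function argument (as in \cite{oscar}) shows that the family $\phi_\epsilon$ is generated by the Hamiltonian $H_\epsilon(x)=H(y)$ with $x=\alpha(y,\epsilon\nabla_y H)$. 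Concretely: differentiate $f\circ\phi_\epsilon$ in $\epsilon$, substitute the factorisation, and use $\nabla_y H$ as the "momentum" conjugate to the birealisation to recognise $\tfrac{\partial}{\partial\epsilon}(f\circ\phi_\epsilon) = \{f, H(y)\}\circ\phi_\epsilon$ with $y$ the intermediary point. Since by definition $H_\epsilon(x)=H(y)$, this is exactly the claimed identity.

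The main obstacle is the second part: cleanly justifying that the intermediate variable $y_n$ plays the role of the momentum in a generating-function description, and thereby that $H_\epsilon(x)=H(y)$ is genuinely the Hamiltonian generating the family. This is essentially the content of the symplectic-groupoid / birealisation formalism of \cite{oscar}, so I would either invoke that construction directly — noting that \eqref{eq:Pois_Ham_1} is the order-$1$ truncation of the general scheme built there — or reproduce the short computation showing that $\tfrac{\partial}{\partial\epsilon}\alpha_\epsilon$ and $\tfrac{\partial}{\partial\epsilon}\beta_\epsilon$, pulled back along the factorisation, combine into the Hamiltonian vector field of $x\mapsto H(y)$. Well-definedness of $y_n$ for $\epsilon$ small (implicit function theorem, using that $\alpha_\epsilon$ is a diffeomorphism near $\epsilon=0$ since $d\alpha_0=\mathrm{id}$) is routine and I would dispatch it in one line.
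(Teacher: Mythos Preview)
The paper does not prove this proposition at all: it simply states ``The following proposition is proven in \cite{oscar}'' and moves on. Your plan therefore already goes further than the paper, and in substance it recapitulates exactly the mechanism of \cite{oscar}: the factorisation $\phi_\epsilon=\beta_\epsilon\circ\alpha_\epsilon^{-1}$ with $\alpha_\epsilon(y)=\alpha(y,\epsilon\nabla_yH)$, $\beta_\epsilon(y)=\beta(y,\epsilon\nabla_yH)$, together with the Lagrangian nature of the birealisation, is precisely what forces $\phi_\epsilon$ to be the time-$\epsilon$ flow of $x\mapsto H(y)$. Since you yourself say you would ``invoke that construction directly'' from \cite{oscar}, your approach and the paper's coincide.

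One small computational slip to flag in your order-$1$ check: with $\alpha$ as in \eqref{eq:bireal_cluster} one has $\partial_{p_k}\alpha_j(x,0)=\tfrac{1}{2}a_{jk}x_jx_k$, so $\partial_p\alpha(x,0)\!\cdot\!\nabla_xH=\tfrac{1}{2}X_H(x)$ and your expression $-2\,\partial_p\alpha(x,0)\!\cdot\!\nabla_xH$ equals $-X_H(x)$, not $X_H(x)$. This is a sign convention issue (effectively whether one writes $x_n=\alpha(\cdot)$, $x_{n+1}=\beta(\cdot)$ or the reverse) rather than a flaw in the strategy, and it does not affect the Hamiltonian-Poisson property you are after.
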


The construction of higher order Hamiltonian Poisson integrators through birealisations are explained in \cite{laurent2025}. The goal of this article is to illustrate the benefits of the existence of a modified Hamiltonian. Since this study is more qualitative than quantitative, all the simulations of Hamiltonian Poisson integrators we show are at order $1$.

\subsection{Examples of integrable systems and numerical simulations}\label{sec:LV_integrable}

Now, we illustrate numerically the long run estimates of the theorem \ref{thm:estimates} on a completely integrable Lotka-Volterra system. Liouville integrable Lotka-Volterra systems with $N=2$ or $3$ degrees of freedom are easy to construct; in order to obtain Liouville integrable systems of higher dimensions, we concatenate several Liouville integrable Lotka-Volterra systems.

\begin{figure}[h!]

\begin{subfigure}[b]{\textwidth}
\includegraphics[width = 0.8\textwidth]{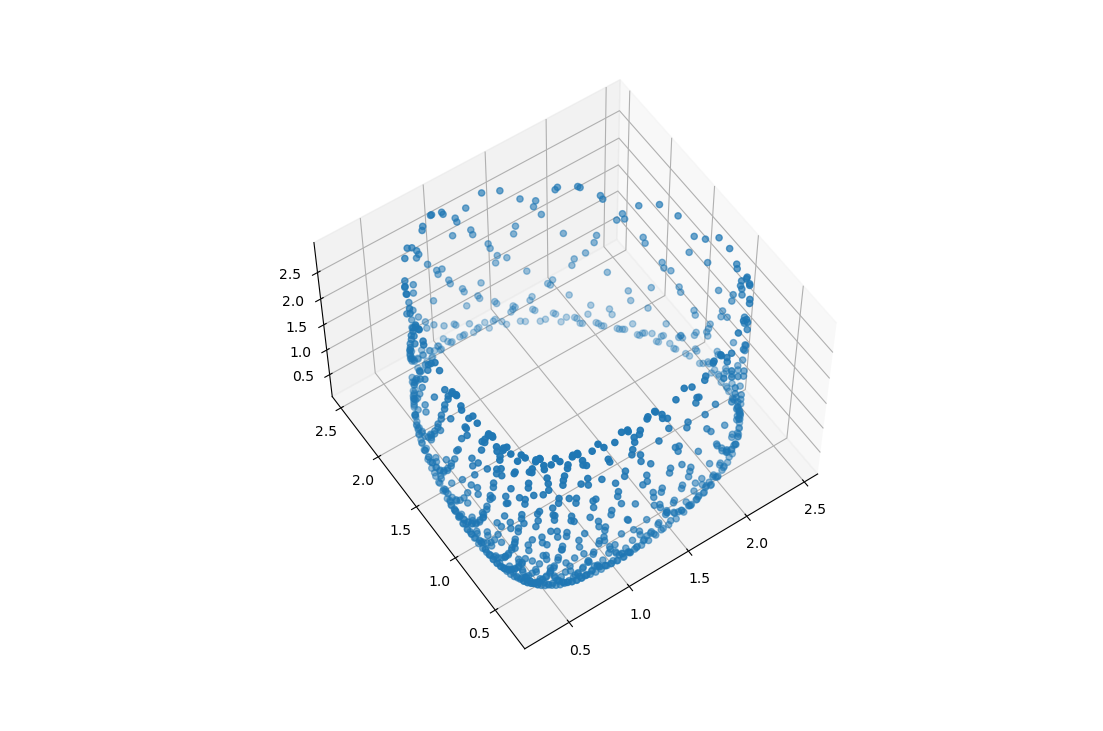}
\caption*{Quasi-periodicity: the $10^{2}$ first iterations of the $3$ first coordinates}
\end{subfigure}
\\
\begin{subfigure}[b]{0.475\textwidth}
\includegraphics[width = 0.9\textwidth]{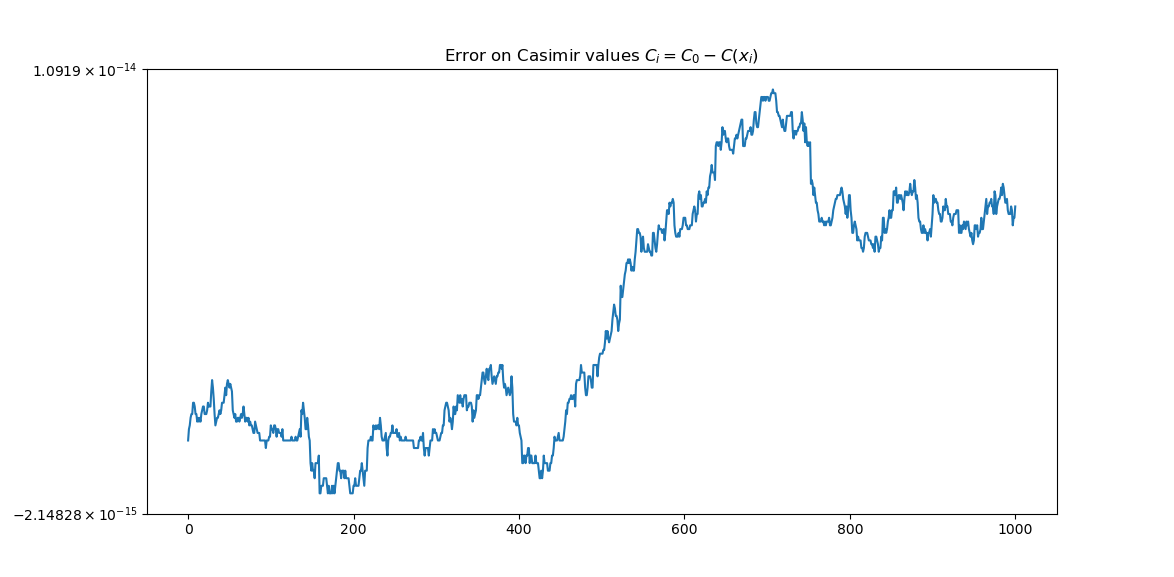}
\caption*{The Casimir discrepancy}
\end{subfigure}
\hfill
\begin{subfigure}[b]{0.475\textwidth}
\includegraphics[width = 0.9\textwidth]{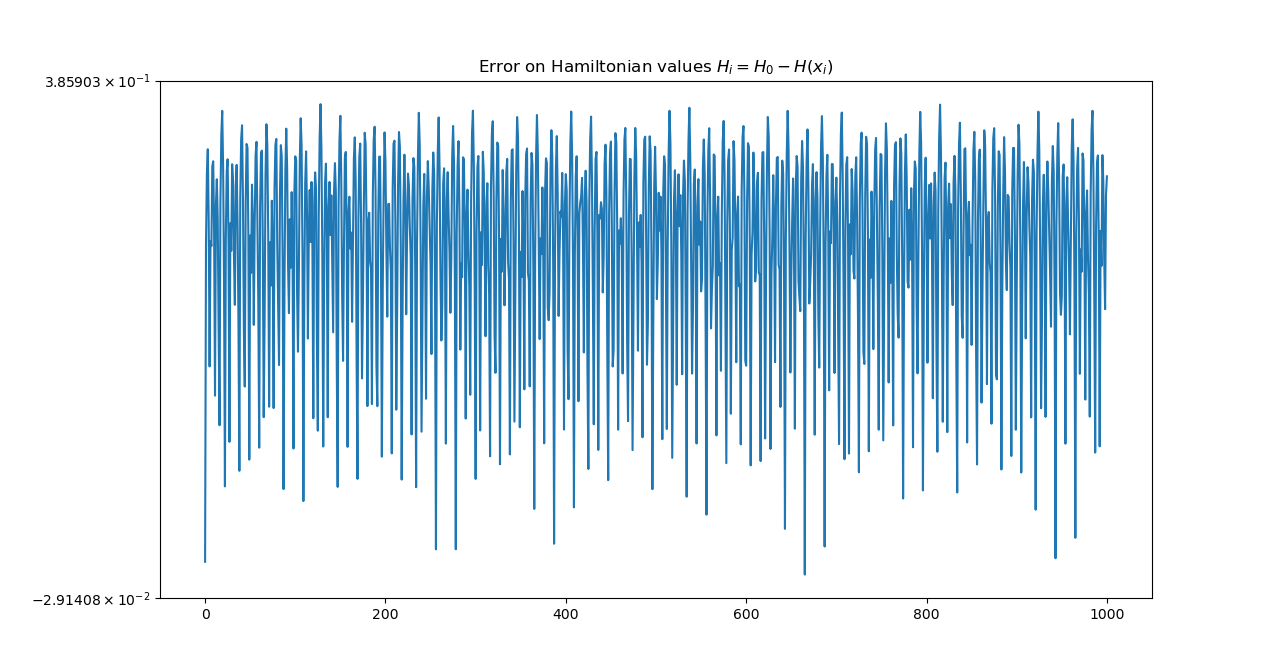}
\caption*{The Hamiltonian discrepancy}
\end{subfigure}

\caption{The $10^2$ first iterations of the Hamiltonian Poisson integrator \eqref{eq:Pois_Ham_1} applied to the system \eqref{eq:LV_5d_integrable} with time-step $\Delta t = 1$ and initial point $x_0 = (2,2,2,2,2)$}
\label{fig:Ham_Pois_1_integrable}
\end{figure}

We set the dimension $N = 5$, the interaction matrix $A = \begin{pmatrix} 
0 & 1 & 0 & 0 & 0 \\
-1 & 0 & 0 & 0 & 0 \\
0 & 0 & 0 & 1 & 1\\
0 & 0 & -1 & 0 & 0 \\
0 & 0 & -1 & 0 & 0 
\end{pmatrix}$ and $\varepsilon = \begin{pmatrix} -1 \\ 1 \\ 2 \\ -1 \\ -1 \end{pmatrix}$. The corresponding Lotka-Volterra system admits the fixed point $q = \begin{pmatrix} 1 \\ 1 \\ 1 \\ 1 \\ 1 \end{pmatrix}$ and is Hamiltonian for $H(x) = \sum\limits_{i=1}^5 (x_i - \log x_i)$. The system we look at is
\begin{equation}\label{eq:LV_5d_integrable}
\begin{array}{ccc}
x_1 &=& x_1(-1 + x_2)\\
x_2 &=& x_2(1 - x_1)\\
x_3 &=& x_3(2 + x_4 + x_5)\\
x_4 &=& x_4(-1 -x_3) \\
x_5 &=& x_5(-1 - x_3)
\end{array}
\end{equation}

It is an integrable system because there is one Casimir $C = \frac{x_4}{x_5}$ and the Hamiltonian splits into two commuting functions $H_1$ and $H_2$. Indeed, 
\begin{equation}
H = I_1 + I_2, \text{ where } 
\begin{array}{cc}
I_1 (x) =& x_1 + x_2 - \log x_1 - \log x_2\\
I_2 (x) =& x_3 + x_4 + x_5 - \log x_3 - \log x_4 - \log x_5
\end{array}
\end{equation}
and the integrability of \eqref{eq:LV_5d_integrable} comes from $\{I_1, I_2\} = 0$. The error estimates of the theorem \ref{thm:estimates} applies. We simulate the method \eqref{eq:Pois_Ham_1} and provide the results of the simulation in the figure \ref{fig:Ham_Pois_1_integrable}. There, the time-step is chosen purposely big to show the stability properties of the integrator. 

Let us conclude this section with two perspectives.

First, the fact that the Hamiltonian Poisson integrators detect faithfully Liouville tori allows to use them to investigate the integrability of Lotka-Volterra systems. To exhibit integrable Lotka-Volterra systems is a non-trivial task \cite{Bountis2021} and has been tackled with algebraic tools, see, e.g. \cite{Bountis2016}. To our knowledge, the question of the characterization of the integrability of Lotka-Volterra systems with linear terms, i.e., non-zero $\varepsilon$, remains essentially open, and we hope Hamiltonian Poisson integrators to be of some help in this task.

The second perspective deals with geometric numerical methods in control theory. In this section, we constructed numerical methods that are proved to behave well with respect to the symplectic foliations on one hand, and with respect to the tori foliations provided by integrable Hamiltonian systems on the other hand. This suggests further application in control theory, where the preservation of leaves, seen as constraints for the system, matters from a numerical point of view. The construction of numerical methods that are stable with respect to foliations is mainly open, whereas the range of potential field applications is wide, see the remark \ref{rk:applications}. We emphasize that our methods are only applicable if the conditions of the proposition \ref{prop:Ham_structure} are fulfilled.

\section{Around non-resonant elliptic singularities}\label{sec:stability_ell}

In this section, we study the behavior of Hamiltonian Poisson integrators when they are applied to compute trajectories around a non-resonant elliptic singularity of a Hamiltonian vector field. In section \ref{sec:BEO_elliptic}, we introduce useful definitions, terminology and properties concerning Hamiltonian vector fields around non-resonant elliptic singularities. In particular, we explain how those singularities provide periodic orbits and how useful is the backward error analysis in order to analyse the behavior of Hamiltonian Poisson integrators in this context. The section \ref{sec:euler_sym_elliptic} illustrates this on the toy example of the Euler symplectic scheme applied to a harmonic oscillator. In section \ref{sec:non_integrable}, we exhibit a non-integrable Hamiltonian dynamics where the theorem \ref{thm:estimates} does not apply. To finish, we pursue some numerical investigations on this non-integrable system in section \ref{sec:num_investig}, where stability properties of Hamiltonian Poisson integrators are used to investigate some chaotic behavior of the dynamics.

\subsection{Backward error analysis for the stability of Hamiltonian Poisson integrators around elliptic orbits}\label{sec:BEO_elliptic}

The idea of the next definition is the one of a smooth family of periodic orbits, all going around one same point. We were not able to exactly find this definition in the literature, even though the concept of family of periodic orbits is part of the folklore of dynamical systems \cite[chap. III]{poincare1892}, \cite[Sec. 7]{seifert1948}, \cite{moser1976}.

\begin{definition}\label{def:family}
A family of periodic orbits of a vector field $X \in \mathcal{X}(M)$ around a point $q \in M$ is a surface $\Pi \subset M$ such that:
\begin{itemize}
\item $q \in \Pi$,
\item $\Pi$ is foliated by periodic orbits around $q$ and the foliation is regular: there exists $\eta_0 >0$ such that for all $0 < \eta < \eta_0$, there exists a periodic integral curve $\gamma_\eta$ of $X$, with $\Pi \setminus \{q\} =  \bigcup_{0< \eta < \eta_0} \gamma_\eta$. 
\end{itemize}
\end{definition}

For the sequel, we introduce now a convenient terminology. Let $M$ be a Poisson manifold of dimension $N$ and $q \in M$ be on a symplectic leaf of dimension $2r$. Let $H \in \func{M}$ be a Hamiltonian. $q$ will be called a non-resonant elliptic singularity of $H$ if
\begin{itemize}
\item $q$ is a singularity for $X_H$: $X_H (q) = 0$,
\item this singularity is elliptic: the linearisation $X_H^{'}(q)$ has $2r$ purely imaginary eigenvalues $\lambda_1, -\lambda_1, \ldots, \lambda_r, -\lambda_r$,
\item and non-resonant: that the set $(\lambda_j)_{1 \leq j \leq r}$ is free over $\mathbb{Z}$, i.e.
\begin{equation}
\forall \nu \in \mathbb{Z}^{r} \left[ \sum\limits_{1 \leq j \leq r} \nu_j \lambda_j = 0 \Rightarrow  \nu = 0 \right].
\end{equation}
\end{itemize}

This definition is a natural extension on a Poisson manifold of the classical definition of a non-resonant elliptic singularity in symplectic geometry. The following statement ensures the existence of a family of periodic orbits in a neighborhood of an elliptic singularity and can be found, for instance, in \cite[Thm. 5.6.7]{Abraham1978}.

\begin{theorem}[Liapounov theorem]\label{thm:lyapou}
Let $(P, \omega)$ be a symplectic manifold of dimension $2r$ and $H$ be twice continuously differentiable. Let $q$ be a non-resonant elliptic singularity of $H$. Then, there exists $r$ families of periodic orbits of $X_H$ around $q$.
\end{theorem}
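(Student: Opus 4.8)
The plan is to prove this by a Lyapunov--Schmidt reduction in a space of periodic loops, following the classical scheme for the Lyapunov centre theorem. First I would normalise: assume $q=0$, choose Darboux coordinates making $\omega$ the standard symplectic form on $\mathbb{R}^{2r}$, and set $A:=X_H'(0)=J\,D^2H(0)$. The hypotheses make $A$ infinitesimally symplectic with spectrum $\{\pm i\lambda_1,\dots,\pm i\lambda_r\}$, the $\lambda_j>0$ pairwise distinct (strong non-resonance forbids $\lambda_i=\lambda_j$, and more generally $\lambda_j/\lambda_k\in\mathbb{Z}$ for $j\neq k$); hence $A$ is semisimple and $\mathbb{R}^{2r}=\bigoplus_{j=1}^{r}E_j$, with each $E_j$ an $A$-invariant symplectic plane on which $A$ acts by an (anti)rotation of angular velocity $\lambda_j$. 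After a further linear symplectic change one may also arrange $H=\sum_j\tfrac{\sigma_j\lambda_j}{2}(x_j^2+y_j^2)+O(|z|^3)$ with $\sigma_j=\pm1$, though the higher-order terms need not be put into normal form for what follows.

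Next I would fix $k\in\{1,\dots,r\}$ and recast ``periodic orbit of $X_H$ near $q$ with period near $2\pi/\lambda_k$'' as a bifurcation problem with the period absorbed into a parameter: seeking $z(t)=u(\beta t)$ with $u$ of period $2\pi$ and $\beta$ near $\lambda_k$ turns the equation into $F(u,\beta):=\beta u'-X_H(u)=0$, where $F$ is a $C^1$ map from, e.g., $\mathcal{X}:=H^1(\mathbb{R}/2\pi\mathbb{Z};\mathbb{R}^{2r})$ to $\mathcal{Y}:=L^2(\mathbb{R}/2\pi\mathbb{Z};\mathbb{R}^{2r})$ (in one variable $\mathcal{X}$ embeds into continuous loops, so the Nemytskii operator $u\mapsto X_H(u)$ is $C^1$ because $X_H\in C^1$). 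One has $F(0,\beta)\equiv0$, and $L:=D_uF(0,\lambda_k)=\lambda_k\tfrac{d}{ds}-A$. A Fourier computation shows $L$ is Fredholm of index $0$ ($\lambda_k\tfrac{d}{ds}$ has index $0$ and $A$ is a relatively compact perturbation) and, crucially, that non-resonance forces $\ker L=E_k$ to be exactly two-dimensional: a nonzero mode $v_m e^{ims}$ requires $\lambda_k m\in\{\pm\lambda_j\}$, possible by distinctness and $\lambda_j/\lambda_k\notin\mathbb{Z}$ only for $m=\pm1$ and $j=k$. Since $\operatorname{ind}L=0$ this also gives $\dim\operatorname{coker}L=2$.

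I would then run the Lyapunov--Schmidt reduction: split $\mathcal{X}=\ker L\oplus\mathcal{X}_1$ and $\mathcal{Y}=\operatorname{Ran}L\oplus\mathcal{Z}$ with projection $Q$ onto $\mathcal{Z}$, write $u=\xi+w$ with $\xi\in\ker L$, solve the range equation $(I-Q)F(\xi+w,\beta)=0$ for $w=w(\xi,\beta)$ by the implicit function theorem, and obtain the bifurcation function $\Phi(\xi,\beta):=Q\,F(\xi+w(\xi,\beta),\beta)\in\mathcal{Z}\cong\mathbb{R}^2$, whose zeros are the sought periodic orbits, with $\Phi(0,\beta)\equiv0$. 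The phase-shift circle action $u(\cdot)\mapsto u(\cdot+\alpha)$ commutes with $F$, with $w$, and hence with $\Phi$, and acts by standard rotation on $\ker L\cong\mathbb{C}$ and on $\mathcal{Z}\cong\mathbb{C}$; equivariance forces $\Phi(\xi,\beta)=\xi\,g(|\xi|^2,\beta)$ with $g$ complex-valued and $g(0,\lambda_k)=0$. At this point the Hamiltonian structure enters essentially: $\Phi$ is, under these identifications, the $L^2$-gradient of an $S^1$-invariant reduced action functional (equivalently, $H$ is conserved along solutions of the range-reduced equation), which makes the obstruction in the phase-shift direction vanish identically, i.e. $\operatorname{Im}g\equiv0$. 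The equation collapses to the scalar equation $g(|\xi|^2,\beta)=0$, and since $\pm i\lambda_k$ is a \emph{simple} eigenvalue of $A$ one gets the transversality $\partial_\beta g(0,\lambda_k)\neq0$; the implicit function theorem yields $\beta=\beta_k^{*}(\tau)$ for $\tau=|\xi|^2\in[0,\delta)$ with $\beta_k^{*}(0)=\lambda_k$.

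Finally I would assemble the family: for each amplitude $\rho\in(0,\sqrt\delta)$ the circle of solutions with $|\xi|=\rho$ projects, after undoing the time rescaling (period $2\pi/\beta_k^{*}(\rho^2)\to 2\pi/\lambda_k$), to a single periodic integral curve $\gamma_\rho$ of $X_H$; these are genuine periodic orbits rather than the equilibrium ($u=\xi+w\neq0$ for $\rho\neq0$) and are pairwise distinct because $H$ is strictly monotone along them for small $\rho$ (as $H|_{\gamma_\rho}\approx\tfrac{\sigma_k\lambda_k}{2}\rho^2$). Smoothness of $\rho\mapsto\gamma_\rho$ makes $\Pi_k:=\{q\}\cup\bigcup_{0<\rho<\sqrt\delta}\gamma_\rho$ an embedded surface through $q$ foliated by periodic orbits with $\rho$ a regular foliation parameter, which is exactly Definition \ref{def:family}; carrying this out for $k=1,\dots,r$ yields the $r$ families. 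The main obstacle is the reduction of the two-dimensional bifurcation equation to a scalar one: this is precisely where the symplectic/Hamiltonian structure is indispensable (the Weinstein--Moser observation that the phase-direction obstruction vanishes automatically), since without it the analysis would only predict isolated periodic orbits rather than one-parameter families. The transversality $\partial_\beta g(0,\lambda_k)\neq0$, resting on the simplicity of the eigenvalue $i\lambda_k$ guaranteed by non-resonance, is the secondary delicate point; the Fredholm analysis of $L$, the two implicit-function-theorem steps, and the $C^1$-regularity bookkeeping are routine.
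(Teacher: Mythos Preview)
The paper does not give its own proof of this theorem: it is quoted with a reference to \cite[Thm.~5.6.7]{Abraham1978} and used as a black box thereafter. So there is nothing in the paper to compare your argument against directly.

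That said, your Lyapunov--Schmidt reduction on the loop space $H^1(S^1;\mathbb{R}^{2r})$ is a correct and standard route to the Liapounov centre theorem. The reference the paper cites (and the closely related persistence criterion worked out in the paper's Appendix, which is modelled on \cite[Thm.~5.6.6]{Abraham1978}) argues differently, via the flow of $X_H$ and an implicit-function argument on an energy hypersurface/Poincar\'e section rather than in a function space of loops. Your variational approach makes the role of the symplectic structure very transparent (the collapse of the two-dimensional bifurcation equation to a scalar one through $\operatorname{Im} g\equiv 0$ is exactly where being Hamiltonian is used), whereas the flow-based approach is lighter on functional-analytic machinery and interfaces more directly with the monodromy conditions the paper uses elsewhere (its assumption~(4) in Theorem~\ref{thm:persistence}).

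One technical point to watch: with $H$ only $C^2$, hence $X_H$ only $C^1$, the map $\Phi$ is only $C^1$, and after the equivariant factorisation $\Phi(\xi,\beta)=\xi\,g(|\xi|^2,\beta)$ the function $g$ is a priori merely continuous in $\tau=|\xi|^2$. Your final implicit-function step solving $g(\tau,\beta)=0$ for $\beta$ then needs a little extra care at this regularity; one common fix is to parametrise the family by the energy level rather than by the amplitude $|\xi|$, which restores the needed differentiability. This is a bookkeeping issue rather than a gap in the strategy.
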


\begin{remark}\label{rk:geom_interp}
Under the assumptions of the theorem \ref{thm:lyapou}, the eigenvalues of $\frac{\partial H}{\partial x}(q)$ come by pairs $(\lambda, -\lambda) \in \C^2$. Every pair $(\lambda, -\lambda)$ is associated with a family of periodic orbits $\Pi$ in the following geometric way: the real and imaginary parts of the eigenvector of $\lambda$ are a basis of the tangent space $T_{q} \Pi$ at $q$. This will be used later on.
\end{remark}

The following theorem uses the backward error analysis to investigate the behavior of Hamiltonian Poisson integrators in a compact neighborhood of a non-resonant elliptic singularity.

\begin{theorem}\label{thm:stability_periodic}
Let $M$ be an analytic Poisson manifold, and let $H$ be an analytic smooth function. Let $q$ be a non-resonant elliptic singularity for $X_H$, $\Pi$ be the corresponding family of periodic orbits and $K$ be a compact neighborhood of $q$ in $M$. Let $(\phi_\epsilon)_{\epsilon \in I}$ a Hamiltonian Poisson integrator of order $k$ for $H$ such that 
\begin{itemize}
\item for any $x \in K$, the map
\begin{equation}
\phi(x) \colon 
\begin{array}{ccc}
I & \to & M\\
\epsilon & \mapsto & \phi_\epsilon (x)
\end{array}
\end{equation}
is analytic,
\item for all $\epsilon \in I,$ $\phi_\epsilon(q) = q$.
\end{itemize}
Then, there exists a metric $d$ on $M$, $\epsilon_0 >0,$ $(\tilde{H}_\epsilon)_{-\epsilon_0 < \epsilon < \epsilon_0 } \in \func{]- \epsilon_0, \epsilon_0[ \times M}$, $C>0$, $\beta>0$ and a measurable subset $\mathcal{D} \subset [0, \epsilon_0]$ of full Lebesgue measure such that the following holds. For every $\epsilon \in \mathcal{D}$,
\begin{enumerate}
\item \begin{equation}
\forall x \in K, \quad d\Big(\phi_\epsilon(x), \Phi_\epsilon^{X_{\tilde{H}_\epsilon}}(x) \Big) \leq C \epsilon e^{- \frac{\beta}{\epsilon}},
\end{equation}
\item $X_{\tilde{H}_\epsilon}$ has a non-resonant elliptic singularity at $q$.
\end{enumerate}
\end{theorem}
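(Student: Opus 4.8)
The plan is to combine the standard truncation-of-the-modified-Hamiltonian argument from backward error analysis with a perturbation argument for the elliptic singularity. First I would invoke Remark \ref{rem:BEO} and the discussion following it: since $\phi$ is a Hamiltonian Poisson integrator of order $k$ for $H$, there is a formal modified Hamiltonian $\tilde H = \sum_{i\in\N} \frac{\epsilon^i}{i!} H_i$ with $H_0 = H$, whose Hamiltonian vector field is the modified vector field $\tilde X$ of $\phi$. Working on the compact set $K$ and using analyticity of $M$, $H$, $\phi$ and $\pi$, I would appeal to the quantitative (analytic) version of backward error analysis — exactly as in \cite[Chap. IX]{Hairer2006} and its manifold globalisation \cite{Hansen2011} — to extract, for each sufficiently small $\epsilon$, an optimal truncation index $N(\epsilon)\sim \beta/\epsilon$ so that the truncated Hamiltonian $\tilde H_\epsilon := \sum_{i=0}^{N(\epsilon)} \frac{\epsilon^i}{i!} H_i$ defines a genuine smooth (indeed analytic) function on a neighbourhood of $K$, and its time-$\epsilon$ Hamiltonian flow $\Phi_\epsilon^{X_{\tilde H_\epsilon}}$ is $C\epsilon e^{-\beta/\epsilon}$-close to $\phi_\epsilon$ on $K$ in a suitable (say Riemannian) metric $d$ on $M$. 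This is statement (1). The set $\mathcal D$ of full measure enters because the optimal-truncation estimate requires controlling the radius of convergence of the perturbation series, which may fail on a null set of $\epsilon$; the standard way is to take $\mathcal D = [0,\epsilon_0]$ minus the values of $\epsilon$ for which the truncation procedure degenerates, and this exceptional set is null (one may even take $\mathcal D = [0,\epsilon_0]$ after shrinking $\epsilon_0$, but keeping $\mathcal D$ is harmless and matches the literature).

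For statement (2), the key observation is that the hypothesis $\phi_\epsilon(q) = q$ for all $\epsilon$ forces $q$ to be a singularity of the exact modified vector field: differentiating $\phi_\epsilon(q)=q$ at $\epsilon=0$ gives $X_1(q)=0$, and iterating the argument on the flow identity gives $X_i(q)=0$ for all $i$, hence $\tilde X(q)=0$ as a formal series, so in particular $X_{\tilde H_\epsilon}(q)=0$ for the truncation as well. (Alternatively: a Hamiltonian Poisson integrator is the time-dependent flow of $(H_t)$, and $\phi_\epsilon(q)=q$ for all $\epsilon$ means $q$ is fixed by this flow, so $X_{H_t}(q)=0$ for all $t$; the Magnus construction of $\tilde H$ then yields $X_{\tilde H}(q)=0$ identically.) Thus $q$ is a singularity of $X_{\tilde H_\epsilon}$ for every $\epsilon$. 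It remains to check ellipticity and non-resonance. Here I would observe that the linearisation $X_{\tilde H_\epsilon}'(q) = X_H'(q) + O(\epsilon)$ in operator norm, uniformly on $\mathcal D$, by the order-$k$ property and the uniform bounds from backward error analysis. Since $X_H'(q)$ has the $2r$ simple purely imaginary eigenvalues $\pm\lambda_1,\dots,\pm\lambda_r$ on the symplectic leaf through $q$ (and zero eigenvalues in the transverse/Casimir directions, which persist because $\tilde X$ preserves the symplectic foliation by Proposition \ref{prop:Ham_Pois_leaf}), and since $X_{\tilde H_\epsilon}'(q)$ is the linearisation of a Hamiltonian vector field — hence its spectrum is invariant under $\lambda\mapsto -\bar\lambda$ — the simple eigenvalues $\pm i\omega_j(\epsilon)$ vary continuously (in fact analytically where $\mathcal D$ is an interval) with $\epsilon$ and stay on the imaginary axis: a simple eigenvalue of a Hamiltonian matrix cannot leave the imaginary axis without colliding with another, which is excluded for small $\epsilon$ by simplicity. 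This gives ellipticity. For non-resonance, $\omega_j(0)=\lambda_j/i$ are $\Z$-independent; the resonance condition $\sum \nu_j \omega_j(\epsilon)=0$ defines, for each fixed $\nu\neq 0$, a proper analytic subset of the parameter $\epsilon$, hence a null (indeed countable, since the zero set of a nonzero real-analytic function on an interval is discrete) set; removing the countable union over $\nu\in\Z^r\setminus\{0\}$ from $\mathcal D$ — still a full-measure set — ensures $q$ is a non-resonant elliptic singularity of $X_{\tilde H_\epsilon}$ for every $\epsilon\in\mathcal D$.

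The main obstacle, and the place where real care is needed, is the first part: making the optimal-truncation estimate of backward error analysis fully rigorous on a manifold for a Poisson (rather than symplectic) integrator. The symplectic case is treated in \cite[Chap. IX]{Hairer2006} in coordinates and globalised in \cite{Hansen2011}; for the Poisson case one must check that the Magnus-series construction of $\tilde H$ from \cite[Sec. 1]{oscar} produces coefficients $H_i$ with the right Cauchy-type growth estimates $\|H_i\|_K \le i!\, C_0 R^{-i}$ on a complex neighbourhood of $K$, so that the geometric optimisation $N(\epsilon) = \lfloor R/(e\epsilon)\rfloor$ yields the exponential remainder $C\epsilon e^{-\beta/\epsilon}$ with $\beta \sim R/e$. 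The analyticity hypotheses in the theorem statement — on $M$, $H$, $\pi$, and on $\epsilon\mapsto\phi_\epsilon(x)$ — are exactly what is needed to feed this machine; everything else is a continuity/perturbation argument for simple eigenvalues of a parametrised family of Hamiltonian matrices, together with the measure-zero bookkeeping for resonances, both of which are routine once the first step is in place.
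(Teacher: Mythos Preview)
Your approach is essentially the paper's own: item (1) is the optimal-truncation backward error analysis on manifolds, which the paper simply cites as \cite[Thm.~3.1 (ii)]{Hansen2011}, and item (2) is eigenvalue perturbation plus removal of a null set of resonant $\epsilon$. One correction: the full-measure set $\mathcal{D}$ is needed \emph{only} for the non-resonance in item (2) --- the exponential estimate from \cite{Hansen2011} holds for \emph{all} $\epsilon\in[0,\epsilon_0]$, so your paragraph attributing $\mathcal{D}$ to a possible degeneration of the truncation procedure mis-locates its role and should be dropped. Your treatment of item (2) is in fact more explicit than the paper's one-line appeal to ``stability of eigenvalues'': you spell out why $q$ remains a singularity (from $\phi_\epsilon(q)=q$) and why the perturbed eigenvalues stay on the imaginary axis (Hamiltonian spectral symmetry $\lambda\mapsto -\bar\lambda$ plus simplicity), both of which the paper leaves implicit.
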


\begin{proof}
The first item is proven in \cite[Thm. 3.1., (ii)]{Hansen2011}.
The second item comes from the stability of eigenvalues under small perturbations, $\epsilon$ being picked in a set of full measure such that the perturbed eigenvalues remain non-resonant. 
\end{proof}

\begin{remark}
\cite[Sec. 3]{Mclachlan2010} already noticed the importance of resonances and eigenvalues of the linearised Hamiltonian vector field at singularities in order to study the behavior of geometric integrators for the numerical computation of periodic orbits.
\end{remark}

\begin{figure}[h]
\includegraphics[width = 0.8\textwidth]{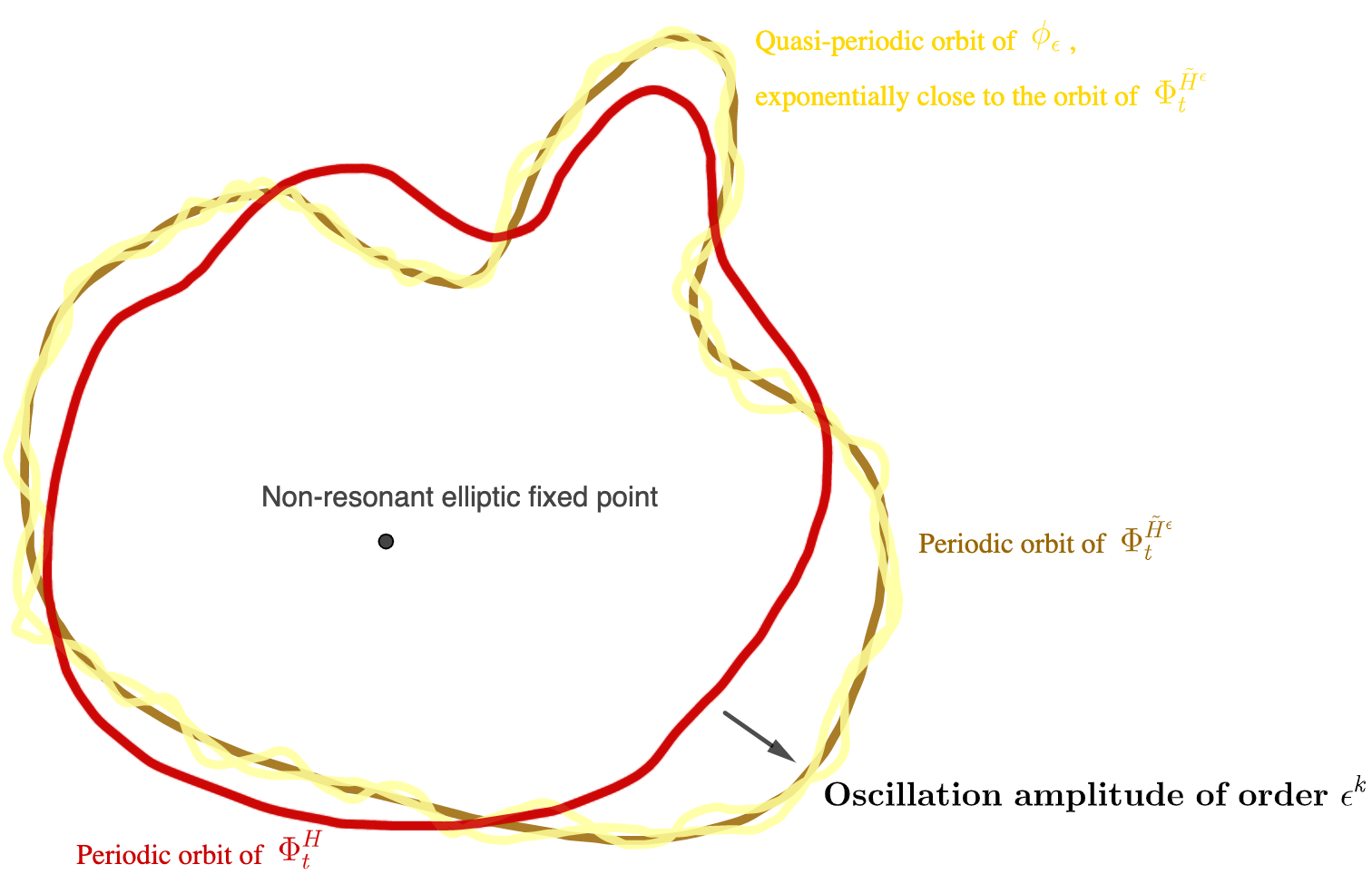}
\caption{The behavior of a Hamiltonian Poisson integrator of order $k$ applied to a Hamiltonian system admitting an elliptic non-resonant singularity on a symplectic leaf of dimension $2$}
\label{fig:elliptic_orbit}
\end{figure}

Let us explain a numerical consequence of the theorem \ref{thm:stability_periodic} on a symplectic leaf of dimension $2$. The Hamiltonian Poisson integrator oscillates with exponentially small oscillations around a periodic trajectory. This periodic trajectory is itself oscillating around the periodic orbit of the Hamiltonian $H$ with oscillation amplitudes of order $\epsilon^k$, i.e. polynomially small and controlled by the order of the method. This, in turn, delivers the fact that a Hamiltonian Poisson integrator, when applied to a Hamiltonian vector field around a non-resonant elliptic singularity, delivers quasi-periodic orbits for the Hamiltonian Poisson integrators. This is illustrated by the sketch of the figure \ref{fig:elliptic_orbit}. The perturbation behavior of Hamiltonian systems around elliptic singularities in higher dimension can be intricate, as it will be illustrated by the example provided in section \ref{sec:non_integrable}.

\subsection{A toy example: the Euler symplectic scheme for the harmonic oscillator}\label{sec:euler_sym_elliptic}

The harmonic oscillator
\begin{equation}\label{eq:harm_osc}
\begin{array}{ccc}
\dot u &=& v\\
\dot v &=& -u
\end{array}
\end{equation}
admits a (non-resonant) elliptic singularity at $0$, with eigenvalues $i$ and $-i$. The symplectic Euler method applied on the system \eqref{eq:harm_osc} reads
\begin{equation}\label{eq:euler_sym}
\begin{array}{cc}
u_{n+1} =& u_n + \epsilon v_n - \epsilon^2 u_n \\
v_{n+1} =& v_n - \epsilon u_n
\end{array}.
\end{equation}

We found numerically the maximum value of $\epsilon_0$ for the theorem \ref{thm:stability_periodic}: the method \eqref{eq:euler_sym} admits periodic orbits for any time-step $0 < \epsilon < 2$, and those orbits are necessary close to the ones of the harmonic oscillator. Numerical simulations are plotted on figure \ref{fig:euler_sym_harm_osc}.

\begin{figure*}
        \centering
        \begin{subfigure}[b]{0.475\textwidth}
            \centering
            \includegraphics[width=\textwidth]{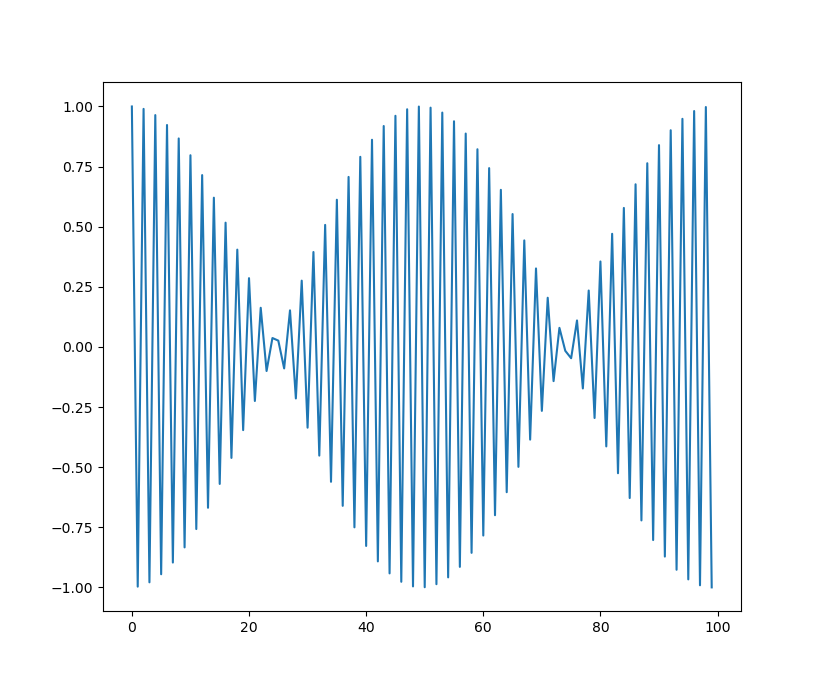}
            \caption{Coordinate $u$, $\epsilon  = 2 - 10^{-3}$}    
        \end{subfigure}
        \hfill
        \begin{subfigure}[b]{0.475\textwidth}  
            \centering 
            \includegraphics[width=\textwidth]{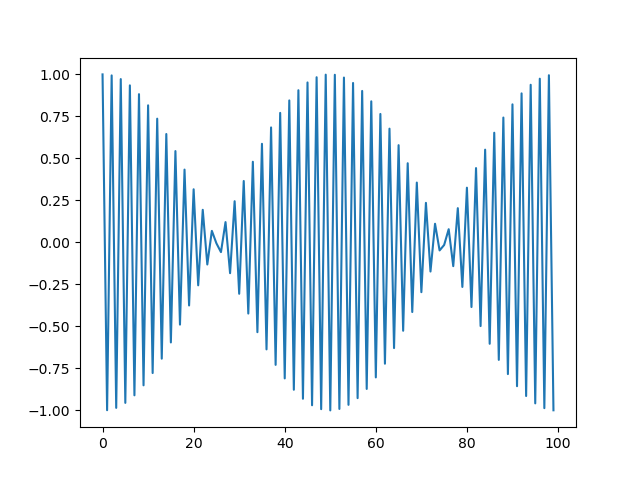}
            \caption[]%
            {Coordinate $v$, $\epsilon  = 2 - 10^{-3}$}    
        \end{subfigure}
        \vskip\baselineskip

        \begin{subfigure}[b]{0.475\textwidth}   
            \centering 
            \includegraphics[width=\textwidth]{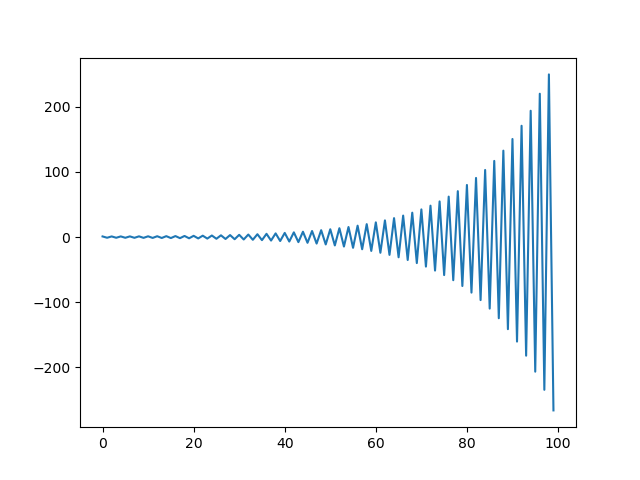}
            \caption[]%
            {Coordinate $v$, $\epsilon  = 2 + 10^{-3}$}    
        \end{subfigure}
	\hfill
        \begin{subfigure}[b]{0.475\textwidth}   
            \centering 
            \includegraphics[width=\textwidth]{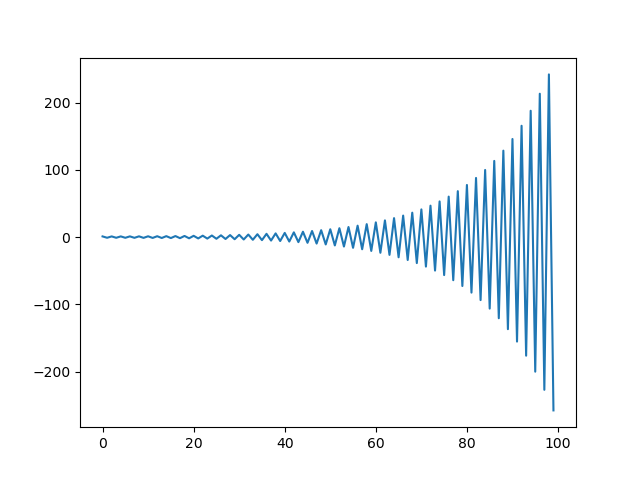}
            \caption[]%
            {Coordinate $u$, $\epsilon  = 2 + 10^{-3}$}    
        \end{subfigure}
        \caption{The first $100$ iterations of the numerical method \eqref{eq:euler_sym} with a time-step $\epsilon$} 
        \label{fig:euler_sym_harm_osc}
\end{figure*}

Now, we comment on the value $\epsilon_0 = 2$ and relate it to Backward error analysis, i.e. to properties of a modified Hamiltonian for the method \eqref{eq:euler_sym}.

\begin{proposition}\label{prop:euler_sym_elliptic}
Let $\tilde X \in \s{\mathcal{\R{2}}}{\epsilon}$ be the modified vector field of \eqref{eq:euler_sym} and $\tilde{X}^1 = X + \epsilon X_1$ its first order truncation.
$\tilde{X}^1$ has an elliptic singularity at $0$ if and only if $0 < \epsilon < 2$.
\end{proposition}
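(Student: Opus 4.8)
The plan is to compute the modified vector field of the symplectic Euler scheme \eqref{eq:euler_sym} up to first order in $\epsilon$, write down its linearisation at $0$, and determine for which $\epsilon$ the resulting $2\times 2$ matrix has purely imaginary eigenvalues. Since \eqref{eq:euler_sym} is already linear, the modified vector field is linear as well and everything reduces to a matrix computation; the elliptic condition for a real $2\times 2$ matrix $B$ is simply $\operatorname{tr} B = 0$ together with $\det B > 0$.

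First I would recover $X_1$ from the Baker--Campbell--Hausdorff / Magnus expansion underlying backward error analysis (Remark \ref{rem:BEO}). Writing the exact flow of the harmonic oscillator as $\varphi^X_\epsilon = \exp(\epsilon A)$ with $A = \left(\begin{smallmatrix} 0 & 1 \\ -1 & 0\end{smallmatrix}\right)$, the one-step map \eqref{eq:euler_sym} is the linear map $\Psi_\epsilon = \left(\begin{smallmatrix} 1-\epsilon^2 & \epsilon \\ -\epsilon & 1 \end{smallmatrix}\right)$, which is the composition of the two shear maps coming from the two half-steps of symplectic Euler. Setting $\Psi_\epsilon = \exp(\epsilon \tilde A(\epsilon))$ with $\tilde A(\epsilon) = A + \epsilon A_1 + O(\epsilon^2)$, I expand $\exp(\epsilon \tilde A(\epsilon)) = I + \epsilon A + \epsilon^2(A_1 + \tfrac12 A^2) + O(\epsilon^3)$ and match it against the Taylor expansion of $\Psi_\epsilon$ in $\epsilon$. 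The $\epsilon^2$-term of $\Psi_\epsilon$ is $\left(\begin{smallmatrix} -1 & 0 \\ 0 & 0 \end{smallmatrix}\right)$, so $A_1 = \left(\begin{smallmatrix} -1 & 0 \\ 0 & 0 \end{smallmatrix}\right) - \tfrac12 A^2 = \left(\begin{smallmatrix} -1 & 0 \\ 0 & 0 \end{smallmatrix}\right) + \tfrac12 I = \left(\begin{smallmatrix} -\tfrac12 & 0 \\ 0 & \tfrac12 \end{smallmatrix}\right)$. Hence the linearisation of $\tilde X^1 = X + \epsilon X_1$ at $0$ is the matrix $B(\epsilon) = A + \epsilon A_1 = \left(\begin{smallmatrix} -\tfrac{\epsilon}{2} & 1 \\ -1 & \tfrac{\epsilon}{2}\end{smallmatrix}\right)$.

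Now I read off the two conditions. We have $\operatorname{tr} B(\epsilon) = -\tfrac{\epsilon}{2} + \tfrac{\epsilon}{2} = 0$ for all $\epsilon$, so the eigenvalues are always of the form $\pm\lambda$; and $\det B(\epsilon) = -\tfrac{\epsilon^2}{4} + 1 = 1 - \tfrac{\epsilon^2}{4}$. The eigenvalues $\pm\sqrt{-\det B(\epsilon)}$ are purely imaginary and nonzero — i.e. $0$ is an elliptic singularity of $\tilde X^1$ — precisely when $\det B(\epsilon) > 0$, that is $\epsilon^2 < 4$, i.e. $-2 < \epsilon < 2$; restricting to $\epsilon > 0$ (time-steps are positive) gives $0 < \epsilon < 2$. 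For $\epsilon \geq 2$ one gets a double zero eigenvalue (at $\epsilon = 2$) or a real hyperbolic pair (for $\epsilon > 2$), so ellipticity genuinely fails. This matches the numerically observed threshold $\epsilon_0 = 2$ for \eqref{eq:euler_sym} and for Theorem \ref{thm:stability_periodic}.

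I do not expect any serious obstacle: the only subtlety is being careful with the normalisation of the modified vector field (the $\tfrac{\epsilon^i}{i!}$ conventions in Remark \ref{rem:BEO}) and with the fact that here $\tilde A$ itself depends on $\epsilon$, so one must match coefficients of the full two-parameter expansion rather than treating $A_1$ as $\epsilon$-independent. An equivalent and perhaps cleaner route, which I would mention as a cross-check, is to use the known modified Hamiltonian of symplectic Euler applied to a quadratic Hamiltonian: for $H(u,v) = \tfrac12(u^2+v^2)$ the first-order modified Hamiltonian is $\tilde H^1 = \tfrac12(u^2+v^2) - \tfrac{\epsilon}{2} uv$ (up to sign conventions), whose Hessian at $0$ is $\left(\begin{smallmatrix} 1 & -\tfrac{\epsilon}{2} \\ -\tfrac{\epsilon}{2} & 1\end{smallmatrix}\right)$; this Hessian is positive definite exactly when $|\epsilon| < 2$, and positive-definiteness of the Hessian of a quadratic Hamiltonian at a singularity is exactly the condition that the singularity be elliptic (definite Williamson normal form), giving the same answer.
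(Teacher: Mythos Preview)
Your proof is correct and reaches the same conclusion as the paper, but by a genuinely different route. The paper derives the first-order modified Hamiltonian via the Magnus formula: it first computes the time-dependent generating Hamiltonian $H_t = \tfrac{u^2}{2} + \tfrac{1-3t^2}{2}v^2 - tuv$ of the one-step map from $\frac{\partial \phi_t}{\partial t}\circ\phi_t^{-1}$, then truncates the Magnus series to obtain $\hat H_\epsilon = \tfrac{\epsilon}{2}(u^2+v^2) - \tfrac{\epsilon^2}{2}uv$, and finally reads off the characteristic polynomial $\lambda^2 + \epsilon^2(1-\tfrac{\epsilon^2}{4})$ of the linearised Hamiltonian vector field. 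You instead bypass the Magnus machinery entirely and match the $\epsilon^2$-coefficient of $\Psi_\epsilon$ against that of $\exp(\epsilon\tilde A(\epsilon))$ to get $A_1$ directly, which is cleaner for this purely linear problem. Your matrix $B(\epsilon)$ is precisely the linearisation of $X_{\tilde H^1}$ for $\tilde H^1 = \hat H_\epsilon/\epsilon$, so the two computations agree (the extra factor $\epsilon^2$ in the paper's characteristic polynomial is just this rescaling and does not affect the ellipticity condition). Your cross-check via the Hessian of $\tilde H^1$ is essentially the paper's argument in compressed form. What the paper's approach buys is an explicit illustration of the Magnus-series formalism that it advocates throughout; what yours buys is a shorter, self-contained computation that does not appeal to the time-dependent Hamiltonian.
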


\begin{proof}
We compute a modified Hamiltonian of the scheme \eqref{eq:euler_sym} up to order $1$ in $\epsilon$. Let us denote it by $\tilde H^1 = H + \epsilon H_1$. We prove that the Hamiltonian vector field of $\tilde H^1$ has an elliptic singularity at $0$ if and only if $0 < \epsilon < 2$. 
We first compute the time-dependent Hamiltonian $H_t$ such that the time-dependent flow of $H_t$ at time $\epsilon$ is the integrator $\phi$ with time-step $\epsilon$ given by the equation \eqref{eq:euler_sym}. Let us set
\begin{equation}
\phi_\epsilon \colon \begin{pmatrix} u \\ v \end{pmatrix} \mapsto \begin{pmatrix} u + \epsilon v - \epsilon^2 u \\ v - \epsilon u \end{pmatrix}.
\end{equation}
Then, its inverse is
\begin{equation}
\phi_\epsilon^{-1} \colon \begin{pmatrix} u \\ v \end{pmatrix} \mapsto \begin{pmatrix} u - \epsilon v \\ \epsilon u + (1- \epsilon^2) v \end{pmatrix}
\end{equation}
and the time-dependent Hamiltonian vector field of $H_t$ is
\begin{equation}
X_{H_t} = \frac{\partial \phi_t}{\partial t} \circ \phi_t^{-1} \colon \begin{pmatrix} u \\ v \end{pmatrix} \mapsto \begin{pmatrix} - t u + (1 - 3 t^2) v\\
- u + t v  \end{pmatrix},
\end{equation}
out of which one obtains $H_t = \frac{u^2}{2} + \frac{1 - 3 t^2}{2} v^2 - t uv$.
 Then, the Magnus formula of $H_t$ up to order $2$ is
 \begin{equation}
 \mathcal{M}\left( (H_t)_t \right)_\epsilon = \epsilon H_0 + \frac{\epsilon^2}{2} \frac{\partial H_t}{\partial t}_{| t = 0} + \bigO{\epsilon^3} = \frac{\epsilon}{2} (u^2 + v^2) - \frac{\epsilon^2}{2} uv + \bigO{\epsilon^3}. 
 \end{equation}
By setting $\hat H_\epsilon (u,v) = \frac{\epsilon}{2} (u^2 + v^2) - \frac{\epsilon^2}{2} uv$ and $X_{\hat H_\epsilon}$ its Hamiltonian vector field, the characteristic polynomial of its linearisation at $0$ is
\begin{equation}
P(\lambda) = \det( \lambda Id - \frac{\partial X_{\hat H_\epsilon}}{\partial (u,v)}(0,0)) = \lambda^2 + \epsilon^2 (1 - \frac{\epsilon^2}{4}).
\end{equation} 
$P$ has two conjugated imaginary roots if and only if $0 < \epsilon < 2$: in that case, $P(\lambda) = (\lambda + i \epsilon \sqrt{1 - \frac{\epsilon^2}{4}})(\lambda - i \epsilon \sqrt{1 - \frac{\epsilon^2}{4}})$. This concludes the proof.
\end{proof}

\begin{remark}
We observed numerically that the discrete trajectories are also quasi-periodic for the edge time-step value $\epsilon = 2$. This is not explained by the proposition \ref{prop:euler_sym_elliptic}. To understand what happens for $\epsilon = 2$, we may have to consider higher terms of the Magnus series, i.e. higher terms of any modified Hamiltonian of the Euler symplectic scheme.

It is also tempting to generalise the proposition \ref{prop:euler_sym_elliptic}. Let $q \in M$ and $H \in \func{M}$ such that $X_H(q)$ is elliptic non-resonant. Let $\Pi$ a family of periodic orbits around $q$ provided by the Lyapunov theorem and $x_0 \in \Pi$. Can we make use of the first terms of the Magnus series to compute the biggest time-step for which a Hamiltonian Poisson integrator will remain quasi-periodic --at least for long time-- while applied at $x_0$ ?
\end{remark}

\subsection{A non-integrable Hamiltonian system}\label{sec:non_integrable}

Most of the Hamiltonian systems are not completely integrable; actually, most of the Hamiltonian systems on a symplectic manifold do not have any other first integrals than the Hamiltonian \cite[Thm. 6]{Robinson1970}. It is therefore interesting to exhibit a non-integrable Hamiltonian system in order to observe how a given integrator behaves there. In this section, we use \cite[sec. IV. 8]{Kozlov1996} and \cite[Sec. 8]{Fernandes1998} to prove the existence of a non-integrable Lotka-Volterra dynamical system in order to deliver -- in the next section -- numerical investigations of it. The idea is to construct a dynamical system that has, locally around an elliptic singularity, too many different periodic orbits to be integrable.

\begin{remark}
In Hamiltonian mechanics, the idea of studying periodic orbits to prove the nonexistence of first integrals is classical, see, e.g., \cite[Appendix 9]{arnold}.
\end{remark}

For any $\delta \in ]-1, 1[$, we introduce the system
\begin{equation}\label{eq:syst_delta}
\begin{array}{cc}
\dot x_1 =& x_1 (2 - x_2 - x_3)\\
\dot x_2 =& x_2 (-2 + x_1 + x_3 + \delta x_4)\\
\dot x_3 =& x_3 (\delta + x_1 - x_2)\\
\dot x_4 =& x_4 (-1 + x_5 - \delta x_2)\\
\dot x_5 =& x_5 (1 - x_4)
\end{array}.
\end{equation}
This system is a Lotka-Volterra dynamical system with 
\begin{equation}
\varepsilon = \begin{pmatrix} 2 \\ -2 \\ \delta \\ -1 \\ 1 \end{pmatrix} \text{ and } A = 
\begin{pmatrix}
0 & -1 & -1 & 0 & 0\\
1 & 0 & 1 & \delta & 0\\
1 & -1 & 0 & 0 & 0\\
0 & -\delta & 0 & 0 & 1\\
0 & 0 & 0 & -1 & 0
\end{pmatrix}.
\end{equation}
It admits the fixed point $q_\delta = \begin{pmatrix} 1 - \delta \\ 1 \\ 1 \\ 1 \\ 1 + \delta
\end{pmatrix}$ and therefore is Hamiltonian for 
$$H_\delta (x) = \sum\limits_{i=1}^5 (x_i - \log x_i) + \delta \log(\frac{x_1}{x_5}).$$ $u = \begin{pmatrix} 1 \\ 1 \\ -1 \\ 0 \\ \delta \end{pmatrix} \in \ker A$ and as a consequence of the proposition \ref{prop:cas}, the cluster Poisson structure associated to the matrix $A$ admits
\begin{equation}
C_\delta \colon x \in \mathcal{Q}^+ \mapsto \frac{x_1 x_2 x_5^{\delta}}{ x_3} \in \R{}
\end{equation}
 as a Casimir.

Let us emphasize that the perturbation parameter $\delta$ is not only in the Hamiltonian $H$ but also in the Poisson structure. We make a first observation.

\begin{proposition}\label{prop:integrable_orbits}
For $\delta=0$, the dynamical system \eqref{eq:syst_delta} is completely integrable. It admits two families of periodic orbits around $q = \begin{pmatrix} 1 \\ 1 \\ 1 \\ 1 \\ 1 \end{pmatrix}$, namely:
\begin{itemize}
\item $\Pi^{1,0} = \{ \begin{pmatrix} 1 \\ 1 \\ 1 \\ x_4 \\ x_5 \end{pmatrix} \in \R{5}, x_4, x_5 \in \R{} \}$
\item and $\Pi^{2,0} = \{ \begin{pmatrix} x_1 \\ x_2 \\ x_3 \\ 1 \\ 1 \end{pmatrix} \in \R{5}, x_1, x_2, x_3 \in \R{}, x_1 x_3 = x_2 \}$.
\end{itemize}
\end{proposition}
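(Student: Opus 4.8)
The plan is to verify the claimed integrability and periodic orbit families directly, by substituting $\delta = 0$ into the system \eqref{eq:syst_delta} and exploiting the fact that the resulting system decouples.

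\textbf{Step 1: Integrability at $\delta = 0$.} With $\delta = 0$, the interaction matrix $A$ becomes block-diagonal: the top-left $3 \times 3$ block governs $(x_1, x_2, x_3)$ and the bottom-right $2 \times 2$ block governs $(x_4, x_5)$. Correspondingly, the Hamiltonian splits as $H_0 = I_1 + I_2$ with $I_1(x) = x_1 + x_2 + x_3 - \log x_1 - \log x_2 - \log x_3$ and $I_2(x) = x_4 + x_5 - \log x_4 - \log x_5$, and a direct check gives $\{I_1, I_2\} = 0$ since the two blocks involve disjoint variables. The Casimir $C_0 = \frac{x_1 x_2 x_5^0}{x_3} = \frac{x_1 x_2}{x_3}$ is the image under Proposition \ref{prop:cas} of $u = (1,1,-1,0,0) \in \ker A$. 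One then counts: $N = 5$, the rank of the cluster Poisson structure on $\mathcal{Q}^+$ is $2r = 4$ (so $r = 2$), and the momentum map $(C_0, I_1, I_2)$ has $s = 3$ components with $r + s = 5 = N$; independence on a dense open subset and involutivity follow from the block structure. Hence $(\mathcal{Q}^+, \pi, (C_0, I_1, I_2))$ is Liouville integrable.

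\textbf{Step 2: Identifying the two periodic-orbit families.} The natural route is via the Lyapunov theorem \ref{thm:lyapou} applied to $q = (1,1,1,1,1)$, the elliptic singularity of $H_0$. Linearizing $X_{H_0}$ at $q$ (using the cluster bracket $\{x_i, x_j\} = a_{ij} x_i x_j$ evaluated at $q$), one obtains a $4 \times 4$ matrix on the symplectic leaf through $q$ whose eigenvalues come in two purely imaginary pairs $(\lambda_1, -\lambda_1)$ and $(\lambda_2, -\lambda_2)$. The key point is that, because the system decouples, these eigenvalue pairs are exactly: one pair from the $3$-species block $(x_1, x_2, x_3)$ (a classical Lotka–Volterra triangle with a $2$-dimensional symplectic leaf inside it, cut out by $C_0 = \text{const}$), and one pair from the $2$-species predator–prey block $(x_4, x_5)$. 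By the geometric interpretation in Remark \ref{rk:geom_interp}, each eigenvalue pair is tangent at $q$ to its associated family of periodic orbits. For the $(x_4,x_5)$-pair the periodic orbits live where $(x_1, x_2, x_3)$ stay frozen at the equilibrium $(1,1,1)$ — that is exactly the surface $\Pi^{1,0}$; and one checks directly that on $\Pi^{1,0}$ the equations reduce to $\dot x_4 = x_4(-1 + x_5)$, $\dot x_5 = x_5(1 - x_4)$, the standard predator–prey system with periodic orbits around $(1,1)$. For the $(x_1,x_2,x_3)$-pair, the orbits live where $(x_4, x_5)$ stay frozen at $(1,1)$, i.e. on a surface inside $\{x_4 = x_5 = 1\}$; intersecting with the leaf condition $C_0 = \frac{x_1 x_2}{x_3} = 1$ (the value at $q$) gives precisely $\Pi^{2,0} = \{x_4 = x_5 = 1,\ x_1 x_3 = x_2\}$, and on this surface the $3$-species block restricted to the leaf is again a planar Hamiltonian system with a nondegenerate center at $(1,1,1)$, hence foliated by periodic orbits.

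\textbf{Step 3: Verification.} Finally I would confirm that $\Pi^{1,0}$ and $\Pi^{2,0}$ are genuinely invariant surfaces (closed under the flow) by plugging into \eqref{eq:syst_delta} with $\delta = 0$: on $\Pi^{1,0}$ the first three equations read $\dot x_1 = \dot x_2 = \dot x_3 = 0$ at $(1,1,1)$, so the surface is invariant; on $\Pi^{2,0}$ one has $\dot x_4 = \dot x_5 = 0$ at $(1,1)$, and $\frac{d}{dt}(x_1 x_3 / x_2) = 0$ because $C_0$ is a Casimir, so $\{x_1 x_3 = x_2\}$ is preserved. That each surface is foliated by \emph{periodic} orbits around $q$ with a regular foliation (in the sense of Definition \ref{def:family}) then follows from the classical phase portrait of the $2$-species Lotka–Volterra system in the $(x_4,x_5)$ case, and from the Lyapunov theorem together with the planar reduction in the $(x_1,x_2,x_3)$ case.

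The main obstacle is the bookkeeping around which symplectic leaf one works on: the cluster Poisson structure has rank $4$, so $q$ sits on a $4$-dimensional leaf, and the two Lyapunov families are $2$-dimensional surfaces inside it. One must be careful that $\Pi^{2,0}$ is described as an intersection with the leaf $\{C_0 = 1\}$ (not all of $\R^5$) in order for the dimension count and the ``planar center'' argument to go through cleanly; this is exactly the interplay between the symplectic foliation and the Lyapunov families that the paper flags as potentially intricate in higher dimension.
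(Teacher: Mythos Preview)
Your approach matches the paper's: the key observation is that at $\delta=0$ the system decouples into the $(x_1,x_2,x_3)$ block (with first integrals $H_1$ and the Casimir $I=\frac{x_1x_2}{x_3}$) and the $(x_4,x_5)$ predator--prey block, and the paper's proof records exactly this and nothing more. You go further than the paper by actually deriving $\Pi^{1,0}$ and $\Pi^{2,0}$ via the Lyapunov eigenspaces and then verifying invariance directly, which is sound and fills in what the paper leaves to the reader. One slip worth flagging: the Casimir level $\frac{x_1x_2}{x_3}=1$ reads $x_1x_2=x_3$, not $x_1x_3=x_2$ --- this inconsistency is already present in the proposition's statement, and you have reproduced it rather than caught it.
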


\begin{proof}
Indeed, the dynamical system \eqref{eq:syst_delta} is the concatenation of two integrable systems:
\begin{equation}
\begin{array}{cc}
\dot x_1 =& x_1 (2 - x_2 - x_3)\\
\dot x_2 =& x_2 (-2 + x_1 + x_3)\\
\dot x_3 =& x_3 (x_1 - x_2)
\end{array}
\end{equation}
with the two first integrals $H_1(x) = \sum_{i=1}^3 (x_i - \log x_i)$ and $I(x) = \frac{x_1 x_2}{x_3}$, and
\begin{equation}
\begin{array}{cc}
\dot x_4 =& x_4 (-1 + x_5)\\
\dot x_5 =& x_5 (1 - x_4)
\end{array}
\end{equation}
with the first integral $H_2(x) = \sum_{i=1}^2 (x_i - \log x_i)$.
\end{proof}

\begin{remark}
In the definition \ref{def:family} of a family of periodic orbits, we assumed the surface $\Pi$ to be of compact closure, whereas $\Pi^{1,0}$ and $\Pi^{2,0}$ are not. For our purpose, all that matters happens near the singularity $q$ so that we may, in the sequel, restrict those families to a compact neighborhood of $q$ without spelling it explicitly.
\end{remark}

In order to prove the non-integrability of the system \eqref{eq:syst_delta}, we use the following intermediary result. This lemma is an easy consequence of the theorem \ref{thm:persistence}, whose details are provided in the appendix \ref{sec:criterion}.

\begin{lemma}\label{cor:duister}
For $\delta$ small enough, the families of periodic orbits given by the proposition \ref{prop:integrable_orbits} persist and provide two families of periodic orbits $\Pi^{1, \delta}$ and $\Pi^{2,\delta}$ around $q_\delta$.
\end{lemma}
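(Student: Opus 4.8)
The plan is to invoke the persistence theorem for families of periodic orbits (Theorem \ref{thm:persistence} in the appendix) separately for each of the two families $\Pi^{1,0}$ and $\Pi^{2,0}$ produced by Proposition \ref{prop:integrable_orbits}, treating $\delta$ as the perturbation parameter acting both on the Hamiltonian $H_\delta$ and on the cluster Poisson structure $\pi_\delta$ associated to the matrix $A = A(\delta)$. The first step is to check that the data depend smoothly (in fact analytically) on $\delta$: the entries of $A$, the vector $\varepsilon$, the fixed point $q_\delta = (1-\delta, 1, 1, 1, 1+\delta)$, the Hamiltonian $H_\delta(x) = \sum_i (x_i - \log x_i) + \delta \log(x_1/x_5)$ and the Casimir $C_\delta$ are all manifestly polynomial or analytic in $\delta$ on the quadrant $\mathcal{Q}^+$, so the Hamiltonian vector field $X_{H_\delta}$ with respect to $\pi_\delta$ depends analytically on $\delta$, and at $\delta = 0$ it reduces to the decoupled system of Proposition \ref{prop:integrable_orbits}.

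Next I would verify the nondegeneracy hypothesis of Theorem \ref{thm:persistence} at $\delta = 0$ for each family. By Remark \ref{rk:geom_interp}, each family $\Pi^{j,0}$ is the one attached, via the Lyapunov theorem, to a conjugate pair of purely imaginary eigenvalues $(\lambda_j, -\lambda_j)$ of the linearization $X_{H_0}'(q)$; the decoupled structure of the $\delta = 0$ system means these eigenvalues come from the two independent integrable blocks (the $3$-dimensional block with first integrals $H_1, I$ and the $2$-dimensional block with first integral $H_2$), and one computes explicitly that $q$ is a non-resonant elliptic singularity of each block, so the relevant eigenvalues are simple, nonzero and purely imaginary. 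This is exactly the situation in which a family of periodic orbits around an elliptic singularity is known to persist under small perturbations of both the symplectic (here Poisson) structure and the Hamiltonian: the periodic orbits lie on the symplectic leaves $\{C_\delta = \text{const}\}$, which themselves vary smoothly with $\delta$ by Proposition \ref{prop:Ham_Pois_leaf}, and on each leaf one has an honest symplectic form, so Theorem \ref{thm:persistence} applies leaf-wise.

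Finally I would assemble the conclusion: for $\delta$ small enough there exist $\eta_0 > 0$ and analytic families $\gamma_\eta^{j,\delta}$ of periodic integral curves of $X_{H_\delta}$, depending continuously on $\delta$ and reducing to $\gamma_\eta^{j,0}$ at $\delta = 0$, whose union (together with $q_\delta$) forms a surface $\Pi^{j,\delta}$ satisfying Definition \ref{def:family}, for $j = 1, 2$. The main obstacle is the bookkeeping of the perturbation acting simultaneously on the Poisson bivector and on $H$: one must ensure that Theorem \ref{thm:persistence} is stated (or can be restated) so as to allow the ambient geometric structure to move with the parameter, not merely the Hamiltonian; the clean way around this is to use that a Hamiltonian Poisson integrator — and more to the point here, the Hamiltonian flow itself — stays on a symplectic leaf, pull everything back to a fixed model leaf $\mathbb{T}^r \times B^{N-r}$ via a $\delta$-dependent diffeomorphism trivializing the foliation (as in the action-angle normal form), and there invoke the classical symplectic persistence statement with a genuinely analytic $\delta$-family of Hamiltonians on a fixed symplectic manifold. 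The remaining verifications — simplicity of eigenvalues and non-resonance at $\delta = 0$, and analyticity of all the $\delta$-dependences — are routine.
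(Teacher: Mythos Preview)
Your plan --- invoke Theorem \ref{thm:persistence} separately for $\Pi^{1,0}$ and $\Pi^{2,0}$ with $\delta$ as the perturbation parameter --- is exactly the paper's approach; the paper simply records the lemma as an immediate consequence of that theorem and does not spell out the hypothesis checks.

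Two points of friction in your execution, though. First, the nondegeneracy you propose to verify is not the one Theorem \ref{thm:persistence} actually asks for: its assumption (4) concerns the monodromy $T_x\Phi^{X}_{T_0}-\mathrm{Id}$ at points $x$ on the periodic orbits themselves (namely that its kernel be two-dimensional and that $X(x)\notin\Im(T_x\Phi^{X}_{T_0}-\mathrm{Id})$), not the eigenvalues of the linearisation at the singular point $q$. Checking that $q$ is non-resonant elliptic is used elsewhere in the paper (Lemma \ref{cor:liapou}) but does not by itself discharge assumption (4); you would still need to exploit the decoupled block structure at $\delta=0$ to compute the Floquet data along the actual orbits of $\Pi^{1,0}$ and $\Pi^{2,0}$.

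Second, your concern about the Poisson bivector moving with $\delta$, and the proposed workaround via action-angle trivialisation onto a fixed model leaf, are unnecessary. Theorem \ref{thm:persistence} is formulated for a smooth family of vector fields $X_\delta$ together with first integrals $H_\delta$ satisfying $X_\delta\cdot H_\delta=0$; it makes no reference to any symplectic or Poisson structure. One simply sets $X_\delta$ equal to the Hamiltonian vector field of $H_\delta$ computed with the $\delta$-dependent bracket and feeds that family directly to the theorem --- the changing geometry is already absorbed into the family $(X_\delta)_\delta$.
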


We now use the Liapounov theorem to obtain two more families of periodic orbits.

\begin{lemma}\label{cor:liapou}
For any $\delta \in ]-1, 1[ \setminus \mathbb{Q} $, the dynamical system \eqref{eq:syst_delta} admits two families $\Pi^{3, \delta}$ and $\Pi^{4, \delta}$ of periodic orbits around $q_\delta$. Furthermore, there exists $\delta_0 >0$ such that for all $\delta \in ]-\delta_0, \delta_0[ \, \bigcap \, (\R{} \setminus \mathbb{Q})$, the families of periodic orbits $\Pi^{k, \delta}$, $1 \leq k \leq 4$ are distinct.
\end{lemma}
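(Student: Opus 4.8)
The plan is to apply the Liapounov theorem (Theorem \ref{thm:lyapou}) to the symplectic leaf through $q_\delta$, obtaining $r=2$ Lyapunov families for each irrational $\delta$, and then to separate these from the two families $\Pi^{1,\delta}, \Pi^{2,\delta}$ of Lemma \ref{cor:duister} by a tangent-space argument at $q_\delta$. First I would check the hypotheses of the Liapounov theorem: the symplectic leaf through $q_\delta$ has dimension $2r$ where $2r$ is the rank of the cluster Poisson structure at $q_\delta$; since $\ker A$ is one-dimensional (spanned by $u$) and $A$ is a $5\times 5$ antisymmetric matrix, the rank of $A$ is $4$, so $r=2$ and the leaf through $q_\delta$ is four-dimensional. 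Restricting $H_\delta$ to this leaf gives a Hamiltonian on a symplectic $4$-manifold with a singularity at $q_\delta$ (recall $q_\delta$ is the fixed point of the Lotka--Volterra flow, hence $X_{H_\delta}(q_\delta)=0$). I would then compute the eigenvalues of the linearization $X_{H_\delta}'(q_\delta)$ restricted to the leaf: linearizing the Lotka--Volterra equations \eqref{eq:syst_delta} at $q_\delta$ yields a matrix whose nonzero eigenvalues come in two purely imaginary pairs $(\lambda_1,-\lambda_1),(\lambda_2,-\lambda_2)$ — this ellipticity should follow from the convexity/compact-level structure of $H_\delta$ together with the Casimir reduction, or by direct computation — and the ratio $\lambda_1/\lambda_2$ depends continuously on $\delta$ with $\lambda_1/\lambda_2 = 1$ at $\delta=0$ (the two uncoupled harmonic-type blocks of Proposition \ref{prop:integrable_orbits} both having frequency $1$). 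For $\delta$ irrational and small one expects $\lambda_1/\lambda_2 \notin \mathbb{Q}$, giving the non-resonance condition; more carefully, one takes $\delta$ in the (full-measure, and in particular dense) set where $\lambda_1(\delta)/\lambda_2(\delta)$ is irrational, which is what the hypothesis ``$\delta \in ]-1,1[\setminus\mathbb{Q}$'' is being used to encode (possibly after shrinking to a subinterval where the eigenvalues stay purely imaginary and the dependence is monotone). Liapounov's theorem then produces the two families $\Pi^{3,\delta},\Pi^{4,\delta}$.

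For the second assertion — that for $\delta$ in a small punctured-irrational neighborhood of $0$ all four families are distinct — the key tool is Remark \ref{rk:geom_interp}: each Lyapunov family $\Pi^{k,\delta}$ has tangent space at $q_\delta$ spanned by the real and imaginary parts of the eigenvector of the corresponding eigenvalue $\lambda_k$. I would compute (or identify by perturbation from $\delta=0$) the four two-planes $T_{q_\delta}\Pi^{k,\delta} \subset T_{q_\delta}M$ for $k=1,\dots,4$: at $\delta=0$ the planes $T_q\Pi^{1,0}$ and $T_q\Pi^{2,0}$ are the obvious coordinate planes read off from Proposition \ref{prop:integrable_orbits} (the $(x_4,x_5)$-plane and the $\{x_1 x_3 = x_2\}$-tangent plane in $(x_1,x_2,x_3)$), while $T_q\Pi^{3,0}$ and $T_q\Pi^{4,0}$ are the two eigenplanes of the linearized flow, which at $\delta=0$ coincide respectively with the $(x_1,x_2,x_3)$-block eigenplane and the $(x_4,x_5)$-block eigenplane — so already at $\delta = 0$ we have $\Pi^{2,0}$ and $\Pi^{3,0}$ equal (both being the periodic-orbit surface of the integrable $3$-dimensional subsystem) and $\Pi^{1,0} = \Pi^{4,0}$. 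Thus the four families are pairwise distinct only for $\delta \ne 0$: I would argue that turning on $\delta \ne 0$ genuinely separates $\Pi^{2,\delta}$ from $\Pi^{3,\delta}$ (and $\Pi^{1,\delta}$ from $\Pi^{4,\delta}$) by showing the relevant tangent planes move apart to first order in $\delta$ — the term $\delta x_4$ in $\dot x_2$ and $\delta x_2$ in $\dot x_4$ couple the two blocks, so the eigenplanes of the linearization tilt away from the coordinate planes at rate $O(\delta)$, whereas $\Pi^{1,\delta},\Pi^{2,\delta}$ are the \emph{persisted} (reduced, non-Lyapunov) families whose tangent planes I would track via the persistence Theorem \ref{thm:persistence}. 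Comparing the six pairs, the nontrivial ones are $(\Pi^{1,\delta},\Pi^{4,\delta})$ and $(\Pi^{2,\delta},\Pi^{3,\delta})$; the others are distinct already at $\delta=0$ by transversality of the tangent planes, hence remain distinct for small $\delta$ by openness. Taking $\delta_0$ small enough that all these first-order separations and transversalities hold, and intersecting with the irrationals, gives the claim.

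The main obstacle I anticipate is the second part — verifying that $\Pi^{2,\delta}$ and $\Pi^{3,\delta}$ (resp.\ $\Pi^{1,\delta}$ and $\Pi^{4,\delta}$) really do separate for $\delta \ne 0$ rather than merely possibly separating. The subtlety is that a Lyapunov family and a persisted integrable family could a priori coincide even after perturbation; ruling this out requires a genuine first-order computation showing the tangent plane $T_{q_\delta}\Pi^{2,\delta}$ (which stays ``horizontal,'' i.e.\ inside the $(x_1,x_2,x_3)$-directions to leading order, since it persists from the decoupled $3$-system) differs from the Lyapunov eigenplane $T_{q_\delta}\Pi^{3,\delta}$ (which acquires an $O(\delta)$ component in the $(x_4,x_5)$-directions because the linearization at $q_\delta$ is no longer block-diagonal). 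This is the crux where the hypothesis $\delta \ne 0$ is essential and where I would expect to spend the real work; everything else is continuity, openness, and the two cited theorems. A secondary technical point is confirming ellipticity of the linearization on the leaf for all $\delta$ in the chosen neighborhood — this should be robust since at $\delta = 0$ the eigenvalues are $\pm i$ (twice, on the leaf) with the convexity of $H_\delta$ preventing them from leaving the imaginary axis for small $\delta$, but it must be stated.
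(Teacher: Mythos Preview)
Your overall architecture matches the paper's: compute the linearisation $M_\delta = \frac{\partial X_{H_\delta}}{\partial x}(q_\delta)$, verify that its nonzero eigenvalues form two purely imaginary pairs, check non-resonance, apply Theorem~\ref{thm:lyapou} on the leaf, and then separate the resulting Lyapunov families (the paper calls them $\widetilde\Pi^{1,\delta},\widetilde\Pi^{2,\delta}$) from $\Pi^{1,\delta},\Pi^{2,\delta}$ by comparing tangent planes at $q_\delta$ via Remark~\ref{rk:geom_interp}.

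There is a concrete error in your plan. You assert that at $\delta=0$ both uncoupled blocks ``have frequency $1$'', hence $\lambda_1/\lambda_2 = 1$. This is false: the $(x_4,x_5)$-block linearises to eigenvalues $\pm i$, but the $(x_1,x_2,x_3)$-block has characteristic polynomial $\lambda^3 + 3\lambda$ and hence eigenvalues $0,\pm i\sqrt{3}$. The paper computes this directly, obtaining $P(\lambda)=\lambda\bigl(\lambda^4+(\delta^2+\delta+4)\lambda^2+(\delta^2+3\delta+3)\bigr)$ and factoring the quartic via the two negative roots $X_1,X_2$ of $\mu^2+A\mu+B$. At $\delta=0$ the frequency ratio is therefore $\sqrt{3}$, already irrational, so your proposed mechanism for non-resonance (a ratio moving monotonically away from $1$) is not what is happening.

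The more substantive divergence is in the distinctness argument. You anticipate that the Lyapunov families \emph{coincide} with $\Pi^{1,0},\Pi^{2,0}$ at $\delta=0$ and plan a first-order-in-$\delta$ perturbation to separate them. The paper does not do this. It writes down an explicit eigenvector $x(\delta,\lambda)$ of $M_\delta$, takes real and imaginary parts at $\lambda=\lambda_1(\delta)$ to obtain generators of $T_{q_\delta}\widetilde\Pi^{1,\delta}$, and lets $\delta\to 0$ in those generators. The paper then asserts that this \emph{limit} plane already differs from $T_q\Pi^{1,0}$ (the $(x_4,x_5)$-plane) and from $T_q\Pi^{2,0}$ (a $2$-plane inside the $(x_1,x_2,x_3)$-directions), and concludes by the continuity clause (ii) of Theorem~\ref{thm:persistence}; the second Lyapunov family is dismissed as ``similar''. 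No first-order computation is performed --- the argument is a zeroth-order comparison of limits. So the step you flag as the main obstacle is precisely what the paper does \emph{not} carry out. Your concern that one Lyapunov tangent plane and $T_q\Pi^{1,0}$ both limit to the $(x_4,x_5)$-plane is not unreasonable, but the paper's proof does not engage with it.
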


\begin{proof}
Let us set
\begin{equation}
M_\delta = \frac{\partial X_{H_\delta}}{\partial x}(q_\delta) =  \begin{pmatrix}
0 & -1 & -1 & 0 & 0 \\
1 & 0 & 1 & \delta & 0 \\
1 & -1 & 0 & 0 & 0\\
0 & - \delta & 0 & 0 & 1\\
0 & 0 & 0 & -1 - \delta & 0
\end{pmatrix}.
\end{equation}
We compute the eigenvalues of $M_\delta$. Its characteristic polynomial is
\begin{equation}
P(\lambda) = \lambda (\lambda^4 + (\delta^2 + \delta + 4) \lambda^2 + (\delta^2 + 3 \delta + 3)).
\end{equation}
Let us set $A = \delta^2 + \delta + 4$, $B = \delta^2 + 3 \delta +3$ and the discriminant $\Delta = A^2 - 2B$. An easy study shows that for any $\delta$: $\Delta >0$, $X_1 = \frac{-A-\sqrt{\Delta}}{2} < 0$ and $X_2 = \frac{-A + \sqrt{\Delta}}{2} < 0$. Therefore, $P$ factorizes as
\begin{equation}
P(\lambda) = \lambda( \lambda + i \sqrt{-X_1}) ( \lambda - i \sqrt{-X_1}) ( \lambda + i \sqrt{-X_2})( \lambda + i \sqrt{-X_2}).
\end{equation}
For $\delta \in ]1, 1[ \bigcap (\R{} \setminus \mathbb{Q})$, $q$ is an elliptic strongly non-resonant fixed point and the theorem \ref{thm:lyapou} delivers two families of periodic orbits $\widetilde \Pi^{1, \delta}$ and $\widetilde \Pi^{1, \delta}$ around $q_\delta$.

Now, we verify that for $\delta$ small enough, $\widetilde \Pi^{1, \delta}$ and $\widetilde \Pi^{2, \delta}$ are both different from the families $\Pi^{1, \delta}$ and $\Pi^{2, \delta}$. We only prove that $\widetilde \Pi^{1, \delta}$ is different from $\Pi^{1, \delta}$ and $\Pi^{2, \delta}$, the proof for $\widetilde \Pi^{2, \delta}$ being similar. The idea is to use the item (ii) of the theorem \ref{thm:persistence}. For any $\delta$, we pick $(v_1, v_2)$ a basis of the tangent space at $T_{q_\delta} \widetilde \Pi^{1, \delta}$ and we prove that as $\delta \to 0$, the vector space generated by $(v_1, v_2)$ is different from the tangent spaces $T_q \Pi^{1, 0}$ and $T_q \Pi^{2,0}$ at $\delta = 0$. Let $x = (x_1, x_2, x_3, x_4, x_5) \in \C^{5}$ and $\lambda \in \mathbb{C}$. By solving
\begin{equation}\label{eq:eigen_eq}
(M_\delta - \lambda Id)x = 0,
\end{equation}
formal computations provide a solution
\begin{equation}
x(\delta, \lambda) = 
\begin{pmatrix}
\delta \frac{1-\lambda}{\lambda^2 + 1} - \lambda +1\\
\delta + \lambda^2 + 1\\
\delta \frac{\lambda - 1}{\lambda^2 + 1} - \lambda - 1\\
-\delta \lambda\\
\delta(\delta + 1)
\end{pmatrix}
\end{equation}
and any dilation of $x$ still solves the equation \eqref{eq:eigen_eq}.
Let $\lambda_k = i \sqrt{- X_k}$, $k = 1, 2$. Using the remark \ref{rk:geom_interp}, the eigenspace $T_{q_\delta} \widetilde \Pi^{k,\delta}$ is generated by the real and imaginary parts of $x(\delta, \lambda_k)$. So $T_{q_\delta} \widetilde \Pi^{1,\delta}$ admits as generators
\begin{equation}
\begin{pmatrix}
\frac{\lambda_1^2 + 1 + \delta}{\lambda_1^2 + 1}\\
\lambda_1^2 + 1 + \delta\\
-\frac{\lambda_1^2 + 1 + \delta}{\lambda_1^2 + 1}\\
0\\
\delta (\delta + 1)
\end{pmatrix}
\text{ and }
\begin{pmatrix}
-\sqrt{- X_1}\frac{\lambda_1^2 + 1 + \delta}{\lambda_1^2 + 1}\\
0\\
-\sqrt{- X_1}\frac{\lambda_1^2 + 1 + \delta}{\lambda_1^2 + 1}\\
-\sqrt{-X_1}\\
0
\end{pmatrix}.
\end{equation}
Since $X_1 \overset{\delta \to 0}{\rightarrow} -3$, the two generators of $T_{q_\delta} \widetilde \Pi^{1,\delta}$ converge as $\delta \to 0$ to 
\begin{equation}
\begin{pmatrix}
1\\
-2\\
-1\\
0\\
0
\end{pmatrix}
\text{ and }
\begin{pmatrix}
-\sqrt{3}\\
0 \\
-\sqrt{3}\\
-\sqrt{3}\\
0
\end{pmatrix}.
\end{equation}
At $\delta = 0$, the tangent space at $q$ of the family of periodic orbits $\Pi^{1,0}$ is generated by
\begin{equation}
\begin{pmatrix}
0\\
0\\
0\\
1\\
0
\end{pmatrix}
\text{ and }
\begin{pmatrix}
0\\
0\\
0\\
0\\
1
\end{pmatrix}
\end{equation}
while the tangent space at $q$ of the family of periodic orbits $\Pi^{2,0}$ is a subspace of the vector space generated by
\begin{equation}
\begin{pmatrix}
1\\
0\\
0\\
0\\
0
\end{pmatrix},
\begin{pmatrix}
0\\
1\\
0\\
0\\
0
\end{pmatrix}
\text{ and }
\begin{pmatrix}
0\\
0\\
1\\
0\\
0
\end{pmatrix}.
\end{equation}
This makes clear that $T_{q_0} \widetilde \Pi^{1,0}$ is different from those two vector spaces and therefore, that the family of periodic orbits $\widetilde \Pi^{1,\delta}$ is different from $\Pi^{1, \delta}$ and $\Pi^{2, \delta}$ for $\delta \in \R{} \setminus \mathbb{Q}$ small enough.
\end{proof}

The following result comes out of the theory of normal forms of Hamiltonian systems and is proven in \cite[Thm. 8.2]{Fernandes1998}.

\begin{theorem}\label{thm:csq_Lyap}
If a Hamiltonian system on a symplectic manifold is completely integrable in a neighborhood of a non-resonant elliptic singular point $q$, then the only families of periodic orbits around $q$ are the ones given by the Lyapounov theorem.
\end{theorem}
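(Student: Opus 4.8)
The plan is to transport the problem, by means of a convergent normal form, to an elementary statement about continuous curves in $\R{r}$. Since the system is analytic and completely integrable near the non-resonant elliptic singularity $q$, the convergence of the Birkhoff normal form for analytic integrable Hamiltonians at a non-degenerate elliptic point (Vey; Eliasson) applies: there are analytic symplectic coordinates $(x_1,y_1,\dots,x_r,y_r)$ centered at $q$ in which the whole momentum map of the integrable structure — in particular $H$ — depends only on the quantities $I_j:=\tfrac12(x_j^2+y_j^2)$; we write $H=h(I_1,\dots,I_r)$ with $\nabla h(0)=(\lambda_1,\dots,\lambda_r)$, where $\pm i\lambda_1,\dots,\pm i\lambda_r$ are the eigenvalues of $X_H'(q)$. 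In the associated polar coordinates $(I_j,\theta_j)$ the flow reads $\dot I_j=0$, $\dot\theta_j=\omega_j(I):=\partial_{I_j}h(I)$; the Liouville tori are the sets $\{I=\mathrm{const}\}$, and the $j$-th Lyapounov family of Theorem \ref{thm:lyapou} is the disc $\Pi_j:=\{I_k=0 \text{ for all } k\neq j\}$, each of whose orbits is periodic because only the $j$-th mode rotates.

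At a point with active set $S:=\{j:I_j\neq 0\}$, the orbit lies on a single Liouville torus and is a genuine periodic orbit precisely when the restricted frequency vector $(\omega_j(I))_{j\in S}$ is a positive real multiple $c\,\kappa$ of a primitive vector $\kappa\in\mathbb{Z}^{S}$, its minimal period being $2\pi/c$. Now let $\Pi$ be a family of periodic orbits around $q$ in the sense of Definition \ref{def:family}, with leaves $\gamma_\eta$, $0<\eta<\eta_0$, and $\gamma_\eta\to q$ as $\eta\to 0$. Each $I_j$ is constant along orbits (it is an integral), so $I(\eta):=I|_{\gamma_\eta}\in\R{r}$ is continuous with $I(\eta)\to 0$, hence $\omega(I(\eta))\to(\lambda_1,\dots,\lambda_r)$, all of whose components are nonzero. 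I claim $|S(\eta)|=1$ for all sufficiently small $\eta$. Otherwise, choose $\eta_n\to 0$ with $|S(\eta_n)|\geq 2$ and write $\omega_{S(\eta_n)}(I(\eta_n))=c_n\kappa_n$ as above; the norms $c_n|\kappa_n|=|\omega_{S(\eta_n)}(I(\eta_n))|$ stay between two fixed positive bounds (each $\omega_j(I(\eta_n))$ is close to $\lambda_j\neq 0$), and since — as holds for the Lyapounov families and as is part of the regularity of a family of periodic orbits — the periods $2\pi/c_n$ remain bounded as $\gamma_{\eta_n}\to q$, the integers $|\kappa_n|$ are bounded. Passing to a subsequence, $S(\eta_n)\equiv S_0$ with $|S_0|\geq 2$ and $\kappa_n\equiv\kappa_0$, so $\omega_{S_0}(I(\eta_n))\to(\lambda_j)_{j\in S_0}$ is a nonzero real multiple of $\kappa_0\in\mathbb{Z}^{S_0}$; this yields a nontrivial relation $\sum_{j\in S_0}\nu_j\lambda_j=0$, contradicting that $q$ is non-resonant.

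Granting the claim, the open sets $\{I_j\neq 0\}$, $1\le j\le r$, are pairwise disjoint for $\eta<\eta_1$ and cover the connected interval $(0,\eta_1)$, so exactly one of them, say $\{I_{j_0}\neq 0\}$, is nonempty; hence every leaf $\gamma_\eta$ with $\eta<\eta_1$ lies in $\Pi_{j_0}$. These leaves exhaust a punctured neighbourhood of $q$ both in $\Pi$ and in $\Pi_{j_0}$, so $\Pi$ coincides with $\Pi_{j_0}$ near $q$. Thus every family of periodic orbits around $q$ is one of the $r$ Lyapounov families, which is the assertion.

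The main obstacle is the first step: the convergence of the normal form is exactly where complete integrability is used decisively, and it fails for general Hamiltonians. The second delicate point is the boundedness of the periods as $\gamma_\eta\to q$, which is what prevents $\Pi$ from being threaded by periodic orbits of ever-increasing rotation number accumulating at $q$; it holds for the Lyapounov families and is implicit in the regularity required of a family of periodic orbits, but it must be invoked explicitly. Everything else is formal. A complete treatment along these lines is given in \cite[Thm.~8.2]{Fernandes1998}.
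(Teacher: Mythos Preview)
The paper does not actually prove Theorem~\ref{thm:csq_Lyap}; it merely states that the result ``comes out of the theory of normal forms of Hamiltonian systems'' and refers to \cite[Thm.~8.2]{Fernandes1998}. Your sketch follows precisely that route --- Vey/Eliasson normal form followed by a frequency argument on the Liouville tori --- and ends by citing the same reference, so you are aligned with the paper's intent and have in fact supplied considerably more detail than the paper itself.

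Two remarks on the sketch. First, the theorem as stated in the paper does not assume analyticity; you invoke Vey's analytic normal form, whereas Eliasson's smooth version is what covers the statement in the generality given (and is what \cite{Fernandes1998} uses). Second, the boundedness of the periods as $\eta\to 0$, which you rightly flag as the delicate point, can actually be extracted from Definition~\ref{def:family} rather than assumed: since $\Pi$ is a smooth surface through $q$ and $X_H$ is tangent to it, the tangent plane $T_q\Pi$ is a real $2$-dimensional subspace invariant under the linearisation $X_H'(q)$; non-resonance forces the $\lambda_j$ to be nonzero and pairwise distinct, so $T_q\Pi$ must coincide with one of the eigenplanes $E_{j_0}$, and the periods of the orbits on $\Pi$ near $q$ then converge to $2\pi/\lambda_{j_0}$. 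This both closes the gap you identified and shortcuts the connectedness argument of your final paragraph, since it pins down $j_0$ directly.
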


The theorem \ref{thm:csq_Lyap} and the lemma \ref{cor:liapou} deliver altogether the non-integrability of the Hamiltonian system \eqref{eq:syst_delta}.

\begin{corollary}
For $\delta \in \R{} \setminus \mathbb{Q}$ small enough, the dynamical system \eqref{eq:syst_delta} is not integrable around $q_\delta$. More precisely, there exists a neighborhood of $q_\delta$ in $M$ such that on this neighborhood, the dynamical system \eqref{eq:syst_delta} does not admit any continuously differentiable first integral independent of $H$ and $C_\delta$.
\end{corollary}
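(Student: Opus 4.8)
The plan is to combine Lemma~\ref{cor:liapou} with Theorem~\ref{thm:csq_Lyap} by contraposition. Suppose, for $\delta \in \R{} \setminus \mathbb{Q}$ with $|\delta|$ small enough that Lemma~\ref{cor:liapou} applies, that the system \eqref{eq:syst_delta} were completely integrable in some neighborhood of $q_\delta$. First I would note that $q_\delta$ is a non-resonant elliptic singularity: this is exactly the eigenvalue computation carried out inside the proof of Lemma~\ref{cor:liapou}, where $P(\lambda) = \lambda(\lambda+i\sqrt{-X_1})(\lambda-i\sqrt{-X_1})(\lambda+i\sqrt{-X_2})(\lambda-i\sqrt{-X_2})$ with $X_1,X_2<0$, and the zero eigenvalue is the direction transverse to the symplectic leaf through $q_\delta$ (the Casimir $C_\delta$), so the restriction of $X_{H_\delta}$ to that $4$-dimensional leaf has purely imaginary spectrum $\{\pm i\sqrt{-X_1},\pm i\sqrt{-X_2}\}$; strong non-resonance over $\mathbb{Z}$ holds because $\sqrt{-X_1}/\sqrt{-X_2}$ is a ratio whose square is irrational for irrational $\delta$ near $0$ (indeed $X_1,X_2$ are algebraic functions of $\delta$ and one checks the ratio is non-constant, hence irrational generically — this is implicit in Lemma~\ref{cor:liapou} already asserting the Lyapounov families exist). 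Then Theorem~\ref{thm:csq_Lyap}, applied on the symplectic leaf, forces the \emph{only} families of periodic orbits around $q_\delta$ to be the two Lyapounov families $\Pi^{3,\delta},\Pi^{4,\delta}$.

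Next I would derive the contradiction. Lemma~\ref{cor:duister} produces two further families $\Pi^{1,\delta},\Pi^{2,\delta}$ (the perturbed versions of the product-structure families $\Pi^{1,0},\Pi^{2,0}$), and Lemma~\ref{cor:liapou} states that for $\delta$ small the four families $\Pi^{k,\delta}$, $1\le k\le 4$, are pairwise distinct. In particular $\Pi^{1,\delta}$ is a family of periodic orbits around $q_\delta$ that is different from both Lyapounov families, contradicting the conclusion of Theorem~\ref{thm:csq_Lyap}. Hence the system cannot be integrable near $q_\delta$.

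For the sharper ``more precisely'' clause, I would spell out what non-integrability means here: a continuously differentiable first integral $G$ defined on a neighborhood $V$ of $q_\delta$ and independent of $H_\delta$ and $C_\delta$ (i.e. $dG, dH_\delta, dC_\delta$ linearly independent on a dense subset of $V$) would, together with $H_\delta$ restricted to the symplectic leaf, give a genuinely new first integral of the Hamiltonian system on the $4$-dimensional leaf, making it Liouville integrable in a neighborhood of $q_\delta$ on that leaf — which is precisely the hypothesis of Theorem~\ref{thm:csq_Lyap} and is refuted by the previous paragraph. So no such $G$ exists on any neighborhood of $q_\delta$.

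The main obstacle I anticipate is the bookkeeping around \emph{which} non-resonance and \emph{which} manifold Theorem~\ref{thm:csq_Lyap} is being applied to: the theorem is stated for symplectic manifolds, so one must carefully descend to the symplectic leaf through $q_\delta$, matching the count of Lyapounov families (here $r=2$, giving exactly two families) and checking that the leaf is genuinely symplectic and that $q_\delta$ is an honest elliptic non-resonant point \emph{of the restricted system}, not of the full Poisson vector field. A secondary subtlety is ensuring the three sets of $\delta$-constraints — irrationality (for Lyapounov non-resonance), smallness for Lemma~\ref{cor:duister}, and smallness $\delta\in\,]-\delta_0,\delta_0[$ for the distinctness in Lemma~\ref{cor:liapou} — can be met simultaneously, which they can since each is satisfied on a set of the form ``$(\R{}\setminus\mathbb{Q})\cap$ (a neighborhood of $0$)''. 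Everything else is a direct chaining of the quoted results.
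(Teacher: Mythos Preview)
Your proposal is correct and takes essentially the same approach as the paper: the paper's proof is the single sentence preceding the corollary, stating that Theorem~\ref{thm:csq_Lyap} and Lemma~\ref{cor:liapou} together yield non-integrability, which is exactly the contraposition you spell out. Your additional care about descending to the symplectic leaf and about the ``more precisely'' clause fills in details the paper leaves implicit, but the logical skeleton is identical.
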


\subsection{Numerical investigations of a non-integrable system}\label{sec:num_investig}

We now use the Hamiltonian Poisson integrator \eqref{eq:Pois_Ham_1} to investigate the dynamics of the system \eqref{eq:syst_delta} around its fixed point $q_\delta$. In this section, we set\footnote{We observed in practice that the assumption for $\delta$ to be irrational does not matter: the numerical results are the same as the ones obtained with $\delta = \sqrt{2}.10^{-2}$. The question on how to deal with irrational numbers in scientific computing is interesting but outside our present scope.} $\delta = 10^{-2}$. We use the results of the last section to recover the family of periodic orbits $\Pi^{k, \delta}$ and $\widetilde \Pi^{k, \delta}$ for $k = 1, 2$. All the simulations have been ruled with the time-step $\epsilon = 10^{-2}$.

Let us explain how we have chosen the initial points of the numerical simulations plotted in this section. Our task is to look for the dynamics living near the family of periodic orbits $\Pi^{k, \delta}$ and $\widetilde \Pi^{k, \delta}$ for $k = 1, 2$. Let $\Pi$ be one of those families and let $(u,v) \in \R{5} \times \R{5}$ be a basis of $T_{q_\delta} \Pi$. Let $\eta >0$ be a real parameter. In this section, the initial points of the iterations we plot are of the form
\begin{equation}
q_\delta + \eta(u+v)
\end{equation}
and we observe wether the simulations exhibit families of periodic orbits or not.

\begin{remark}
Thanks to the symplectic foliation, the context of our numerical investigation is very geometric. Numerical investigations of a transition from an integrable to a more chaotic behavior of the dynamics have been already lead in \cite[Sec. 3.2]{modin2017}, where a symplectic method on the sphere has been applied to rigid body dynamics with forcing terms.
\end{remark}

\subsubsection{Near the family of periodic orbits $\Pi^1$}

The figure \ref{fig:Pi1} shows the $5.10^2$ first iterations of the Hamiltonian Poisson integrator \eqref{eq:Pois_Ham_1} applied to the Hamiltonian system \eqref{eq:syst_delta}. Figure \ref{fig:Pi1_3first} shows the coordinates $x_1, x_2, x_3$ of the iterates of the initial point $(1-10^{-2},  1,  1, 1, 2 + 10^{-2}) = q_\delta + \eta(u+v)$ where $\eta = 1$, $u = \begin{pmatrix} 0 \\ 0 \\0 \\ 1\\0 \end{pmatrix}$ and $v = \begin{pmatrix} 0 \\ 0 \\0 \\ 0\\1 \end{pmatrix}$. The simulation plotted Figure \ref{fig:Pi1_3first} is remarkable, providing a numerical evidence of the non-integrability of the dynamics \eqref{eq:syst_delta}. The figure \ref{fig:Pi1_2last} plots the 2 last coordinates of the iterates of the three different initial points $q_\delta + \frac{i}{3}\eta(u+v)$, $1 \leq i \leq 3$, approximating dynamics near the family of periodic orbits $\Pi^{1, \delta}$. The quasi-periodicity already mentioned in section \ref{sec:BEO_elliptic} is observed in the figure \ref{fig:quasi_periodic}, obtained by zooming in the figure \ref{fig:Pi1_2last}.

\begin{figure}[h]
\centering
\begin{minipage}{.5\textwidth}
  \centering
\includegraphics[width = 0.9\textwidth]{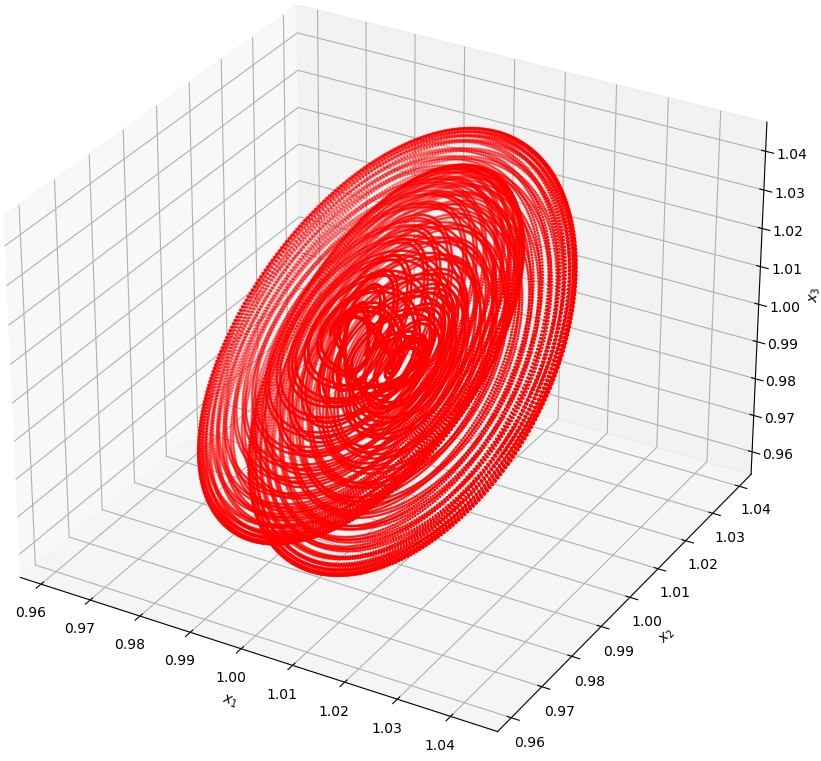}
\caption{Iterates of the coordinates $(x_1, x_2, x_3)$}
  \label{fig:Pi1_3first}
\end{minipage}%
\begin{minipage}{.5\textwidth}
  \centering
\includegraphics[width = 0.9\textwidth]{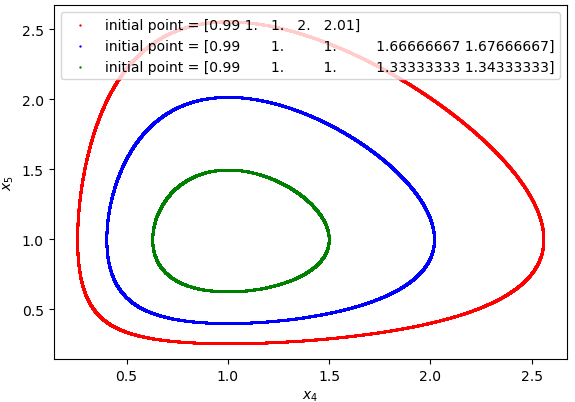}
  \caption{Iterates of the coordinates $(x_4, x_5)$}
  \label{fig:Pi1_2last}
\end{minipage}
\caption{Dynamics near the family $\Pi^{1, \delta}$}
\label{fig:Pi1}
\end{figure}

\begin{figure}[h]
\centering
\includegraphics[width = 0.6\textwidth]{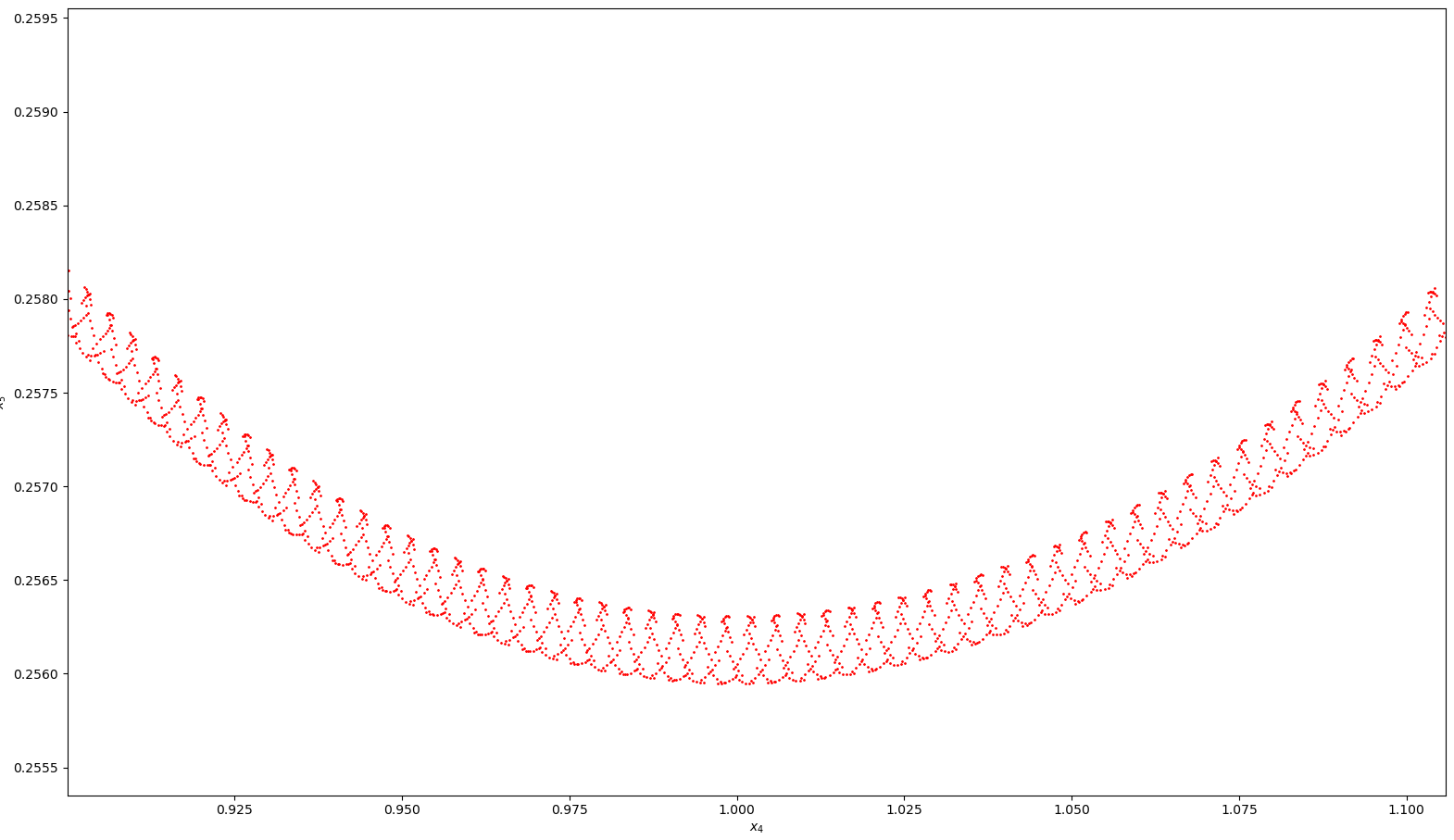}
\caption{A zoom of the figure \ref{fig:Pi1_2last} illustrating the quasi-periodicity of a Hamiltonian Poisson integrator when applied to a periodic orbit}
\label{fig:quasi_periodic}
\end{figure}

\subsubsection{Near the family of periodic orbits $\Pi^2$}

The figure \ref{fig:Pi2} shows the $10^2$ first iterations of the Hamiltonian Poisson integrator \eqref{eq:Pois_Ham_1} applied to the Hamiltonian system \eqref{eq:syst_delta}. Figure \ref{fig:Pi2_3first} shows the coordinates $x_1, x_2, x_3$ of the iterates of three initial points $q_\delta + \frac{i}{3}\eta(u+v)$, $1 \leq i \leq 3$, $\eta = 1$, $u = \begin{pmatrix} 1 \\1 \\2 \\0 \\0 \end{pmatrix}$ and $v = \begin{pmatrix} 1 \\ -1 \\ 0 \\ 0 \\ 0 \end{pmatrix}$. The figure \ref{fig:Pi2_2last} plots the 2 last coordinates of the iterates of the initial point $(\frac{7}{3} - 10^{-2},  1, \frac{7}{3}, 1, 1 + 10^{-2}) = q_\delta + \frac{2}{3}\eta(u+v)$, approximating dynamics near the family of periodic orbits $\Pi^{2, \delta}$. Figure \ref{fig:Pi2_2last} is the analog of the figure \ref{fig:Pi1_3first} for the family $\Pi^2$. They both exhibit a chaotic behavior of the dynamics. In both cases, the iterates remain on a compact set.

\begin{figure}[h]
\centering
\begin{minipage}{.5\textwidth}
  \centering
\includegraphics[width = \textwidth]{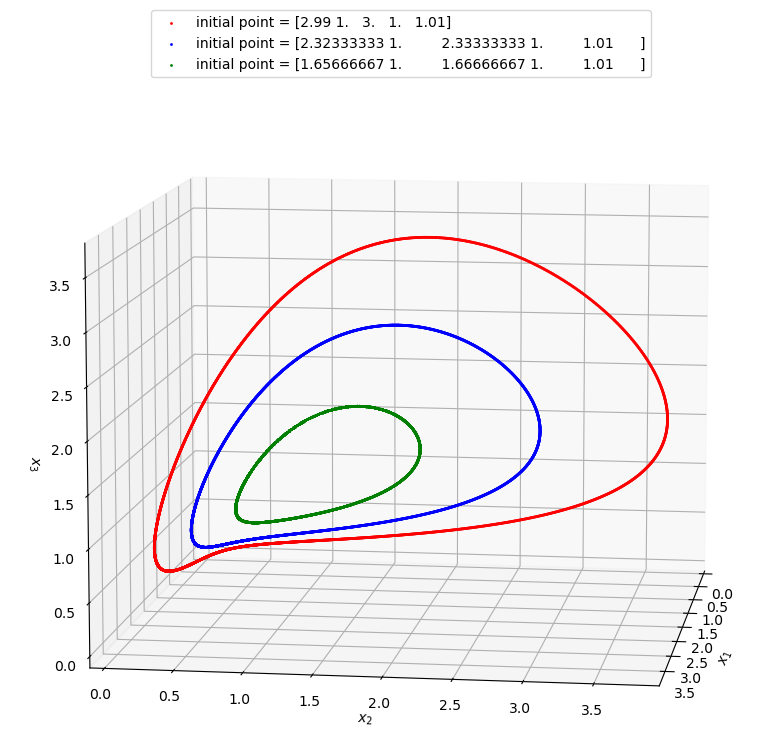}
\caption{Iterates of the coordinates $(x_1, x_2, x_3)$}
  \label{fig:Pi2_3first}
\end{minipage}%
\begin{minipage}{.5\textwidth}
  \centering
\includegraphics[width = \textwidth]{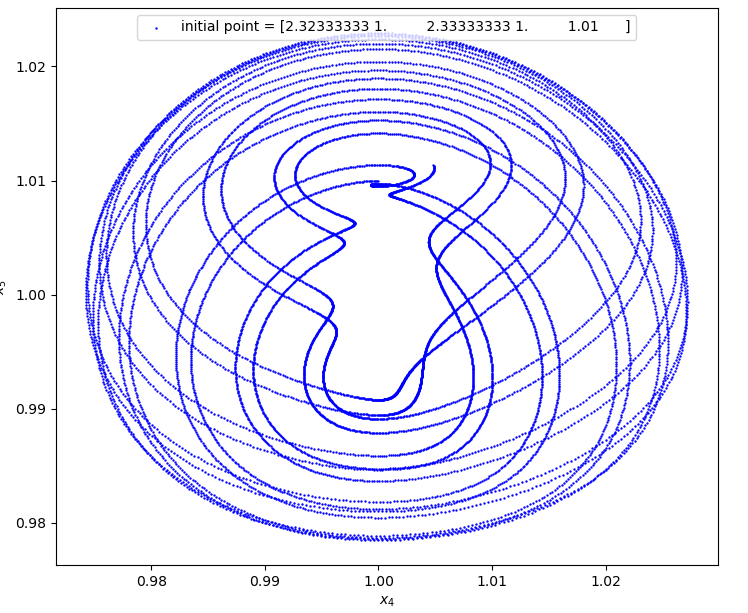}
  \caption{Iterates of the coordinates $(x_4, x_5)$}
  \label{fig:Pi2_2last}
\end{minipage}
\caption{Periodic orbits near the family $\Pi^{2, \delta}$}
\label{fig:Pi2}
\end{figure}

\subsubsection{Near the families of periodic orbits $\widetilde \Pi^1$ and $\widetilde \Pi^2$}

The figure \ref{fig:tildePi1} shows the $10^2$ first iterations of the Hamiltonian Poisson integrator \eqref{eq:Pois_Ham_1} applied to the Hamiltonian system \eqref{eq:syst_delta}. The figures \ref{fig:tilde_Pi1_3first} and \ref{fig:tilde_Pi1_2last} show, respectively, the coordinates $(x_1, x_2, x_3)$ and the coordinates $(x_4, x_5)$ of the iterates of three initial points $q_\delta + \frac{i}{3} \eta (u+v)$, $1 \leq i \leq 3$, $\eta = 10^{-1}$, $u = \begin{pmatrix} 1 \\ -2 \\ -1 \\ 0 \\ 0 \end{pmatrix}$ and $v = \begin{pmatrix} 1 \\ 0 \\1 \\ 1 \\0 \end{pmatrix}$, approximating periodic orbits near the family of periodic orbits $\widetilde \Pi^{1, \delta}$.

\begin{figure}[h]
\centering
\begin{minipage}{.5\textwidth}
  \centering
\includegraphics[width = \textwidth, height = 0.4\textheight]{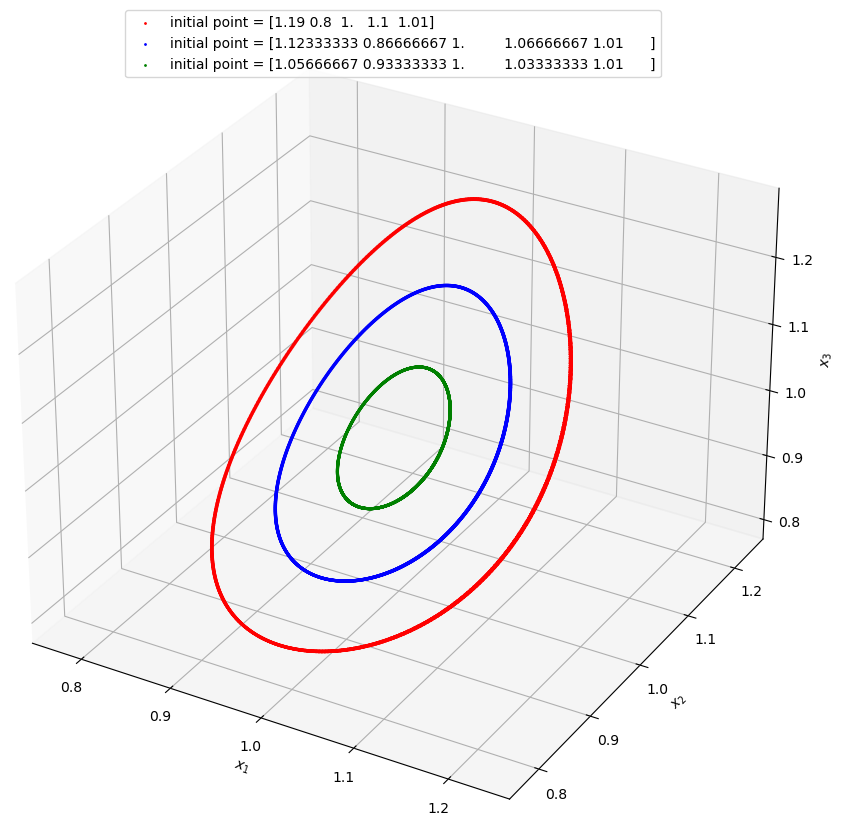}
\caption{Iterates of the coordinates $(x_1, x_2, x_3)$}
  \label{fig:tilde_Pi1_3first}
\end{minipage}%
\begin{minipage}{.5\textwidth}
  \centering
\includegraphics[width = \textwidth, height = 0.4\textheight]{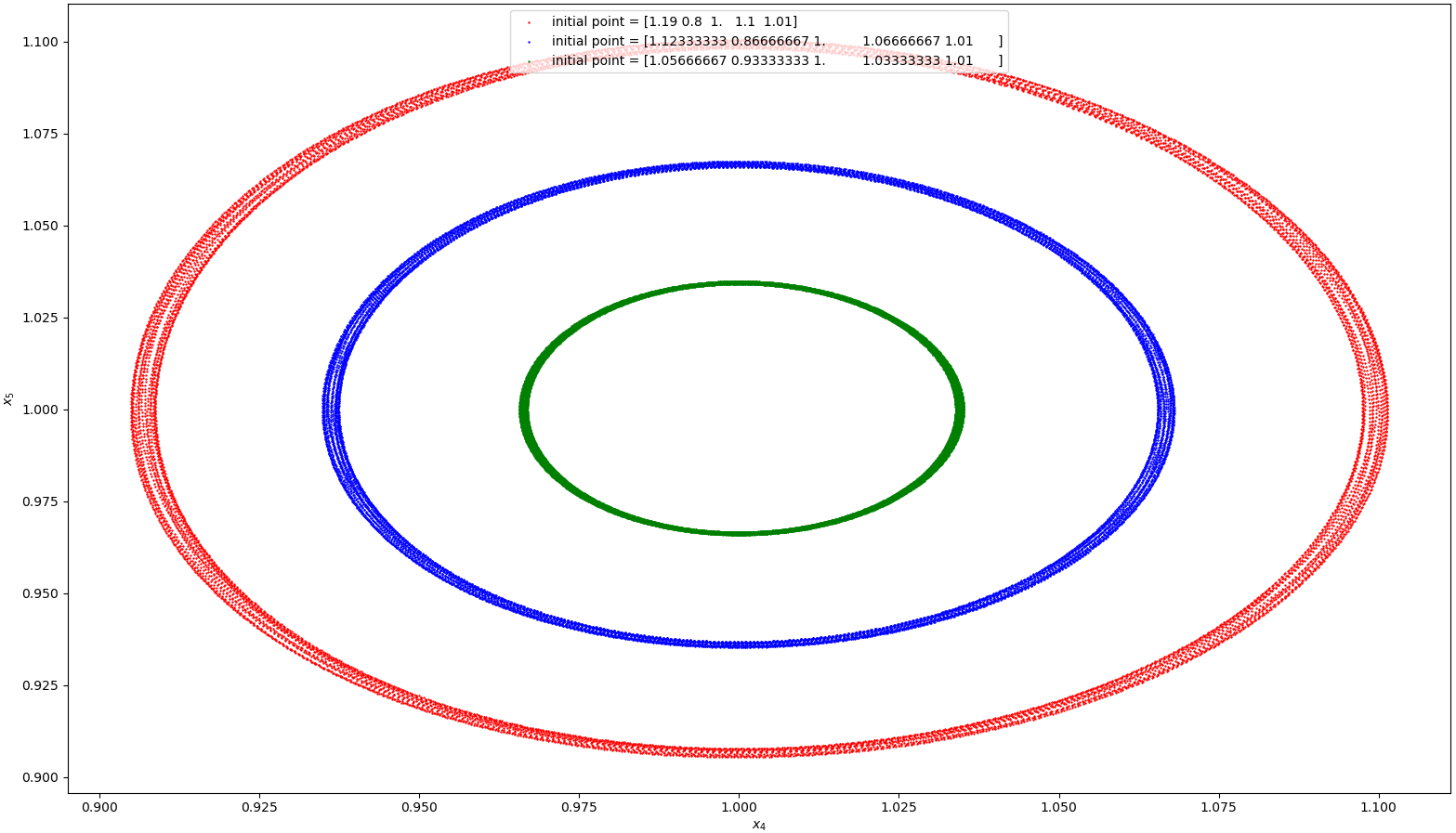}
  \caption{Iterates of the coordinates $(x_1, x_2, x_3)$}
  \label{fig:tilde_Pi1_2last}
\end{minipage}
\caption{Periodic orbits near the family $\widetilde \Pi^{1, \delta}$}
\label{fig:tildePi1}
\end{figure}

Numerical simulations for the dynamics near $\widetilde \Pi^{2, \delta}$ are of the same kind as the ones plotted on Figure \ref{fig:tildePi1}. Figure \ref{fig:tildePi1} provides numerical insights that the families $\widetilde \Pi^{k, \delta}$, $k=1, 2$, are neighbored by other families of periodic orbits. It would be certainly interesting to pursue further numerical investigations in order to precisely find the subset of $\mathcal{Q}^+$ where the transition from stable family of periodic orbits, i.e. of the type of $\widetilde \Pi^{1, \delta}$, to chaotic dynamics observed in the figures \ref{fig:Pi1_3first} and \ref{fig:Pi2_2last}, happens.

\bibliographystyle{alpha}
\bibliography{bib.bib}

\newpage

\begin{appendix}

\section{A stability criterion for a family of periodic orbits}\label{sec:criterion}

We provide here a stability criterion for a family of periodic orbits, used in the section \ref{sec:non_integrable} to obtain the families of periodic orbits $\Pi^{1, \delta}$ and $\Pi^{2, \delta}$. This theorem can be seen as a parameter version of \cite[Thm 5.6.6]{Abraham1978} and our proof sketch follows the same idea.

\begin{theorem}\label{thm:persistence}
Let $(X_\delta)_{- \delta_0 < \delta < \delta_0} \in \mathcal{X}(\R{N})^{] - \delta_0, \delta_0 [}$ be a family of smooth vector fields on a manifold $M$ depending smoothly on $\delta$. Let $(H_\delta)_{- \delta_0 < \delta < \delta_0} \in \func{\R{N} \times ] - \delta_0, \delta_0[}$ such that for all $- \delta_0 < \delta < \delta_0$, 
\begin{equation}\label{eq:first_integral}
X_\delta \cdot H_\delta = 0.
\end{equation}
Let us set $X = X_0$. We make the following assumptions.
\begin{enumerate}
\item there exists a smooth map $\begin{array}{ccc} ]- \delta_0, \delta_0[ &\to& \R{N} \\ \delta &\mapsto& q_\delta \end{array}$ such that for all $- \delta_0 < \delta < \delta_0$, $q_\delta$ is the only singularity of $X_\delta$.
\item $X$ admits a family of periodic orbits $\Pi^0$ around $q_0$.
\item $\Pi^0$ is contained in a regular hypersurface of $H_0$.
\item For any $x \in \Pi^{0} \setminus \{q_0 \}$, let $T_0 > 0$ the period of $x$. We assume
\begin{itemize}
\item $\dim \ker (T_x \Phi^X_{T_0} - Id) = 2$,
\item $X(x) \notin \Im(T_x \Phi_{T_0}^X - Id)$.
\end{itemize}
\end{enumerate}
Then, there exists $\hat \delta_0>0$ such that:
\begin{enumerate}[label = (\roman*)]
\item for all $- \hat \delta_0 < \delta < \hat \delta_0$, $X_\delta$ admits a family of perdiodic orbits $\Pi^{\delta}$ around $q_\delta$,
\item there exists a smooth map $\begin{array}{ccc}
 ] - \hat \delta_0, \hat \delta_0[ &\to & \R{N} \times \R{N} \\
  \delta & \mapsto& (u_\delta, v_\delta) \end{array}$ such that for all $- \hat \delta_0 < \delta < \hat \delta_0$, $\text{span}_{\R{}} (u_\delta, v_\delta) = T_{q_\delta} \Pi^{\delta}$.
\end{enumerate}
\end{theorem}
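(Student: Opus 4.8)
The plan is to follow the proof of \cite[Thm.~5.6.6]{Abraham1978} — a Poincaré-type continuation of periodic orbits — with $\delta$ treated throughout as an extra parameter, on the same footing as the parameter already labelling the orbits of $\Pi^0$; the fourth hypothesis then plays the role of the non-degeneracy condition on the linearised return map, and the first integrals $H_\delta$ serve to cut down dimensions. First I would fix a point $x_\ast \in \Pi^0 \setminus \{q_0\}$, lying on a periodic orbit $\gamma_\ast$ of minimal period $T_\ast$, and a codimension-one transversal $\Sigma$ to $X = X_0$ at $x_\ast$. Transversality is an open condition and $X_\delta$ depends smoothly on $\delta$, so for $|\delta|$ small $X_\delta$ is still transverse to $\Sigma$ near $x_\ast$; hence the first-return map $P_\delta \colon \mathcal{U} \subseteq \Sigma \to \Sigma$ is well defined and smooth jointly in $(\delta, \,\cdot\,)$, with $P_0(x_\ast) = x_\ast$, and fixed points of $P_\delta$ near $x_\ast$ are exactly the periodic orbits of $X_\delta$ near $\gamma_\ast$.

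Next I would reduce by the first integral. By \eqref{eq:first_integral} the flow of $X_\delta$ preserves the level sets of $H_\delta$, hence so does $P_\delta$; and $dH_0(x_\ast) \neq 0$ by the third hypothesis, so $dH_\delta$ does not vanish near $(x_\ast, 0)$ and one may choose a $\delta$-dependent chart $(\xi_1, \zeta)$ on $\Sigma$ near $x_\ast$, with $\zeta \in \R{N-2}$, straightening the $H_\delta$-levels to $\{\xi_1 = \text{const}\}$. In such a chart $P_\delta$ becomes block-triangular, $(\xi_1, \zeta) \mapsto (\xi_1, Q_\delta(\xi_1, \zeta))$ for a smooth reduced return map $Q_\delta$, and I would set $G(\delta, \xi_1, \zeta) := Q_\delta(\xi_1, \zeta) - \zeta \in \R{N-2}$, so that $G(0, 0, \zeta_\ast) = 0$ at $x_\ast$. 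The heart of the matter is that $\partial_{(\xi_1, \zeta)} G$ is onto $\R{N-2}$ at this point, and this is exactly where the fourth hypothesis enters: differentiating the identity $\Phi^X_{T(\eta)}(x(\eta)) = x(\eta)$ along $\Pi^0$ (with $x(\eta) \in \gamma_\eta$ and $T(\eta)$ the period) shows that the neighbouring-orbit direction $w$ lies in $\ker(T_{x_\ast}\Phi^X_{T_\ast} - Id)$, while $X_\delta \cdot H_\delta = 0$ forces $\Im(T_{x_\ast}\Phi^X_{T_\ast} - Id) \subseteq \ker dH_0(x_\ast)$; the assumption $\dim \ker = 2$ then identifies this kernel with $\R{}\, X(x_\ast) \oplus \R{}\, w$, which makes $\partial_\zeta Q_0 - Id$ have one-dimensional kernel $\R{}\, \bar w$, and the assumption $X(x_\ast) \notin \Im(T_{x_\ast}\Phi^X_{T_\ast} - Id)$ translates, through a short linear-algebra computation separating the flow, first-integral and neighbouring-orbit directions, into $\partial_{\xi_1} Q_0 \notin \Im(\partial_\zeta Q_0 - Id)$; together these two facts give the surjectivity.

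Granting this, I would apply the implicit function theorem to $G$ near $(0, 0, \zeta_\ast)$, solving for $(\xi_1, \zeta)$ in terms of $\delta$ and of one free parameter $\eta$ along the solution branch: this yields $\hat\delta_0 > 0$ and a smooth map $(\delta, \eta) \mapsto x_\delta(\eta) \in \Sigma$ of fixed points of $P_\delta$, with $x_0(\eta) = \gamma_\eta \cap \Sigma$ for $\eta$ near $\eta_\ast$; flowing these out by $\Phi^{X_\delta}$ produces periodic orbits $\gamma^\delta_\eta$ whose union is a smooth surface. Covering a compact subannulus $\{\gamma_\eta : \eta \in [a, b] \subseteq (0, \eta_0)\}$ of $\Pi^0$ by finitely many such constructions — these agreeing on the overlaps by the uniqueness part of the implicit function theorem — gives $\Pi^\delta$ over $[a, b]$, depending smoothly on $\delta$ and tending to $\Pi^0|_{[a,b]}$ as $\delta \to 0$. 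To close the family down to its centre I would invoke the first hypothesis: along the continuation the periodic orbits stay in a fixed compact set and vary continuously with $\delta$, so they cannot vanish but only contract onto a singularity, and $q_\delta$ is the only one available; hence $\Pi^\delta \cup \{q_\delta\}$ is a family of periodic orbits around $q_\delta$ in the sense of Definition \ref{def:family}, which is item (i). For item (ii), $\Pi^\delta$ is an embedded surface through $q_\delta$ depending smoothly on $\delta$, so $T_{q_\delta}\Pi^\delta$ varies smoothly with $\delta$ and one takes $(u_\delta, v_\delta)$ to be any smooth local frame of it, shrinking $\hat\delta_0$ if necessary.

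The two genuinely delicate points are the linear-algebra verification in the second paragraph — checking that the fourth hypothesis is exactly the surjectivity of $\partial_{(\xi_1, \zeta)} G$, that is, correctly separating the three contributions to the eigenvalue $1$ of the monodromy (flow direction, first-integral direction, neighbouring-orbit direction) — and the closure at $q_\delta$, where the Poincaré-section continuation degenerates as $\eta \to 0$ and one must fall back on the uniqueness of the singularity together with a compactness argument (or on normal-form methods near the centre, which is possible here since $q_\delta$ is elliptic in the applications). I expect the closure at the centre to be the real obstacle in full generality; the rest is a standard, if bookkeeping-heavy, application of the implicit function theorem.
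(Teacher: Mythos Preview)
Your proposal is correct and follows essentially the same route as the paper: restrict by the first integral, set up a map whose zeros are periodic orbits, read off the rank condition for the implicit function theorem from assumption~(4), continue each orbit in $\delta$, cover a compact annulus of $\Pi^0$, and close up at the centre via assumption~(1). The only cosmetic difference is that the paper works directly with the displacement map $(x,T,\delta)\mapsto\Phi_T^{X_\delta}(x)-x$ on the level set $H_\delta^{-1}(e)$, projected onto $\ker dH_0(x_0)$ with the period $T$ kept as an explicit unknown, whereas you first pass to a Poincar\'e section and then straighten the $H_\delta$-levels inside it---two standard, equivalent packagings of the same continuation argument; your flagging of the closure at $q_\delta$ as the delicate step is apt, and the paper's treatment of that point is equally brief.
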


\begin{proof}
We set
\begin{equation}
\Theta \colon 
\begin{array}{cccccccc}
&\R{N} &\times& ]0, \infty[ &\times& ]-\delta_0, \delta_0[ &\to& \R{N}\\
&x,& &T,&& \delta & \mapsto & \Phi_{T}^{X_\delta}(x) - x
\end{array}.
\end{equation}
A zero of $\Theta$ is a periodic orbit for $X_\delta$. Let $x_0 \in \Pi^{0} \setminus \{ q_0 \}$ and $T_0 >0$ the period of $x_0$ for $X$. Let us set $Z = \ker T_{x_0} H \subset \R{N}$ and $\pi \colon \R{N} \twoheadrightarrow Z$ the normal projection. We write $e = H_0(x_0)$. Using the assumption (3), there exists $\hat \delta_0 >0$ such that for all $-\delta_0 < \delta < \delta_0$, $\Sigma_\delta = H^{-1}_\delta (e) \subset \R{N} $ is a hypersurface. Up to shrinking of $\hat \delta_0$, there exists a neighborhood $\mathcal{U}$ of $0$ in $Z$ and a smooth map $ \phi \colon \mathcal{U} \times ]- \hat \delta_0, \delta_0[ \to \R{N}$ such that $\Sigma_\delta$ is parametrized by $\phi(\cdot, \delta)$ and $\phi(0,0) = x_0$. We set
\begin{equation}
\Psi \colon 
\begin{array}{ccccccccc}
&\mathcal{U}& \times &]0, \infty[& \times &] - \hat \delta_0, \hat \delta_0[& &\to& Z\\
&x,& &T,& &\delta& &\mapsto& \pi\left( \Theta(\phi(x, \delta), T, \delta) \right)
\end{array}.
\end{equation}
Then:
\begin{align}
\frac{\partial \Psi}{\partial x}(0,T_0, 0) &= \pi\left( T_x \Phi_{T_0}^X(x_0) - Id \right)\\
\frac{\partial \Psi}{\partial T} (0, T_0, 0) &= \pi (X(x_0)) = X(x_0).
\end{align}
Out of the assumptions (4) and \eqref{eq:first_integral}, the implicit function theorem applies and provides, for $\delta > 0$ small enough, a zero of $\Psi$ and, in turn, a periodic orbit for $X_\delta$ being close to the one of $x_0$. More precisely, there exists $\hat \delta_0 >0$ and a smooth map
\begin{equation}
(x,T) \colon ]- \hat \delta_0, \hat \delta_0 [ \to \R{N} \times ]0, \infty[
\end{equation}
with $x(0)= x_0$, $T(0) = T_0$ and $\forall - \hat \delta_0 < \delta < \hat \delta_0, \; \Theta( x(\delta), T(\delta)) = 0$.

Now, we use the assumption (2). Let $K$ be a compact neighborhood of $q_0$ in $\Pi^0$. By performing the same reasoning for all $x \in K \setminus \{ q_\delta \}$ in a compact neighborhood of $q_0$ and applying assumption (1), we prove (i). The proof of (ii) comes thereafter easily by smoothness of the maps involved in the implicit function theorem.
\end{proof}

\end{appendix}

\end{document}